\documentclass[english, 11pt]{amsart}

\usepackage{tikz}
\usepackage{aliascnt}
\usepackage{amsfonts}
\usepackage{mathrsfs}
\usepackage{amsthm}
\usepackage{amsmath,amssymb}
\usepackage[all,poly,knot]{xy}
\usepackage{amsfonts}
\usepackage{color}
\usepackage{hyperref}
\usepackage{comment}  
\usepackage{csquotes}

\theoremstyle{plain}
\newtheorem{thmx}{Theorem} 

\newtheorem{thm}{Theorem}[section]  

\newtheorem{cor}[thm]{{Corollary}} 

\newtheorem{lem}[thm]{{Lemma}}

\newtheorem{prop}[thm]{Proposition}

\theoremstyle{remark}
\newtheorem{rmk}[thm]{Remark}

\numberwithin{equation}{section}

\def\log{\mathrm{log}\,}

\def\Sym{\mathrm{Sym}}
\def\cH{\mathscr{H}}


\usepackage[top=1.5in, bottom=1.5in, left=1in, right=1in]{geometry}

 \usepackage[hyperpageref]{backref}
 
\usepackage{xcolor}
\hypersetup{
	colorlinks,
	linkcolor={red!50!black},
	citecolor={blue!50!black},
	urlcolor={blue!80!black}
}
\begin{document} 
\title[Second Main Theorem on Moduli Spaces]{Second Main Theorem on the Moduli Spaces of Polarized varieties} 

	\author{Ruiran Sun}
		 \address{Institut fur Mathematik, Universit\"at Mainz, Mainz, 55099, Germany}
	\email{ruirasun@uni-mainz.de}

\begin{abstract}
Let $(X,D)$ be a smooth log pair over $\mathbb{C}$ such that the complement $U := X \setminus D$ carries a maximally varied family of polarized manifolds. We prove a version of second main theorem on $(X,D)$ by using the Viehweg-Zuo construction of the family and McQuillan's tautological inequality. As an application, we generalize a classical result of Nadel about the distribution of entire curves in the (compactified) base space of polarized families. 
\end{abstract}

\subjclass[2010]{32Q45, 32A22, 53C60}
\keywords{second main theorem, tautological inequality, Higgs bundles, negatively curved Finsler metric,  moduli of polarized varieties}

\maketitle

\section{Introduction}
Recently, Deng, Lu, Zuo and the author proved a big Picard type theorem for certain moduli spaces of polarized varieties \cite{DLSZ} via certain Nevanlinna theoretic arguments. Since the higher dimensional generalizations of the big Picard theorem follow naturally from the appropriate second main theorems/conjectures in Nevanlinna theory, it is natural to ask whether one can establish a second main theorem on these moduli spaces to deal with the more general case of holomorphic curves that intersect possibly the boundary of a moduli space.\\
We first recall the following conjecture of Vojta in Nevanlinna theory.
Let $(X,D)$ be a smooth log pair (i.e. $X$ is a smooth projective variety and $D$ is a normal crossing divisor of $X$) over $\mathbb{C}$. We fix an ample line bundle $\cH$ on $X$.
For any entire curve $f:\, \mathbb{C} \to X$ with $f(\mathbb{C}) \not\subset \mathrm{Supp}\,D$, Vojta's conjecture on the second main theorem {\cite[\S15]{voj11}} predicts the following inequality:
\[
T(r,f,K_X(D)) \leq_{\mathrm{exc}} N^{(1)}(r,f,D) + O(\log T(r,f,\cH) + \log r)
\]
where $T(r,f,K_X(D))$ is the {Nevanlinna characteristic function} of the log canonical line bundle $K_X(D)$ and $N^{(1)}(r,f,D)$ is the $1^{\mathrm{st}}$-truncated counting function of $D$. Here $\leq_{\mathrm{exc}}$ means that the estimate holds outside some exceptional set of $r \in \mathbb{R}_{>0}$ with finite Lebesgue measure.\\
More generally, let $B$ be a connected Riemann surface and $\sigma:\, B \to \mathbb{C}$ is a finite surjective holomorphic map, and we consider a holomorphic curve $f:\, B \to X$ with $f(B) \not\subset \mathrm{Supp}\,D$. Vojta's conjecture {\cite[Conjecture~27.5]{voj11}} predicts the following generalized inequality
\[
T(r,f,K_X(D)) \leq_{\mathrm{exc}} N^{(1)}(r,f,D) + N_{\mathrm{Ram}(\sigma)}(r) + O(\log T(r,f,\cH) + \log r).
\]
where $N_{\mathrm{Ram}(\sigma)}(r)$ is the ramification counting function for the ramified cover $\sigma$. The basics of Nevanlinna theory is reviewed in the beginning of section~\ref{tauto-ineq}.\\[.2cm]
In this notes, inspired by \cite{DLSZ} we consider these log pairs $(X,D)$ which can be interpreted as smooth compactification of the base space of a family. That is, we consider a smooth log pair $(X,D)$ over $\mathbb{C}$ such that the complement $U := X \setminus D$ carries a family of polarized manifolds.\\ 
We shall prove the following version of the second main theorem for $(X,D)$:

\begin{thmx}[= Theorem~\ref{SMT}]\label{thm-A}
Let $(X,D)$ be a smooth log pair over $\mathbb{C}$ and $U:= X \setminus D$. Suppose there is a smooth family $(\psi:\, V \to U, \mathcal{L})$ of polarized smooth varieties with semi-ample canonical sheaves and fixed Hilbert polynomials $h$, such that the induced classifying map from $U$ to the moduli scheme $\mathcal{M}_h$ is quasi-finite. Then for any holomorphic curve $f:\, B \to X$ which is not contained in the support of $D$, we have the following second main theorem: 
\[
T(r,f,K_X(D)) \leq_{\mathrm{exc}}c_{\psi}\cdot \left( N^{(1)}(r,f,D) +N_{\mathrm{Ram}(\sigma)}(r) \right) + O(\log T(r,f,\cH) + \log r).
\]
where $c_{\psi}$ is some positive constant which only depends on $(X,D)$ and the family $\psi$.
\end{thmx}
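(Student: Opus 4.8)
The plan is to close the loop: feed the \emph{Viehweg--Zuo construction} of a negatively curved Finsler metric into \emph{McQuillan's tautological inequality}, and then read off the estimate for $K_X(D)$ from the resulting bound for an ample class. No induction on $\dim X$ will be needed, because the hypothesis that $U\to\mathcal{M}_h$ is quasi-finite is exactly what makes the Finsler metric nondegenerate over \emph{all} of $U$. \textbf{Step 1 (a Finsler metric, positive over all of $U$).} After replacing $(X,D)$ by a log resolution which is an isomorphism over $U$ — this changes the counting, characteristic and ramification functions, and the eventual constant, only by bounded factors and $O(\log r)$, hence is harmless — I would apply the Viehweg--Zuo / Deng--Lu--Sun--Zuo machinery to a family birational to $\psi$ (produced from $\psi$ by cyclic covers attached to a relative pluricanonical system and semistable reduction). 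From the attendant variation of Hodge structure one obtains a graded logarithmic Higgs sheaf on $X$ together with its Hodge metrics, and these assemble into a Finsler pseudometric $F$ on the logarithmic tangent bundle $T_X(-\log D)$ whose holomorphic sectional curvature is $\le -c<0$ along every holomorphic disc in $U$. The quasi-finiteness hypothesis enters precisely here: $F$ is in fact \emph{positive definite at every point of $U$} — if $F(v)=0$ for some $0\ne v\in T_pU$, then every iterate of the Higgs field annihilates $v$, so along a holomorphic disc through $p$ tangent to $v$ with vanishing higher jets the classifying map is constant, contradicting quasi-finiteness of $U\to\mathcal{M}_h$. Combining this with Schmid-type bounds on the (at worst poly-logarithmic) growth of the Hodge norms along $D$, one gets a uniform comparison $F\ge c_0\,\omega$ on $T_X(-\log D)$ over all of $X$, for some $c_0>0$, where $\omega\in c_1(\cH)$ is a fixed Kähler form.

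\textbf{Step 2 (pull back along the tautological lift).} Let $X_1:=\mathbb{P}\big(T_X(-\log D)\big)$ with projection $\pi:\,X_1\to X$ and tautological line bundle $\mathcal{O}_{X_1}(1)$; then $F$ induces a singular Hermitian metric $h_F$ on $\mathcal{O}_{X_1}(-1)$ with poly-logarithmic singularities supported over $\pi^{-1}(D)$. For $f:\,B\to X$ with $f(B)\not\subset\mathrm{Supp}\,D$ and $\sigma:\,B\to\mathbb{C}$ finite surjective, let $f_1:\,B\to X_1$ be the canonical derivative lift. Negative curvature of $F$ yields, along $B$, the pointwise inequality
\[
f_1^*\big(\sn\,\Theta_{h_F}(\mathcal{O}_{X_1}(1))\big)\ =\ \sn\,\partial\bar\partial\log(f^*F)\ \ge\ c\,(f^*F),
\]
and hence, by Step 1, $f_1^*\big(\sn\,\Theta_{h_F}(\mathcal{O}_{X_1}(1))\big)\ge c\,c_0\,f^*\omega$ away from $f^{-1}(D)$. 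Running the usual Nevanlinna double integral over $\{|\sigma|<t\}$, $t\le r$, invoking the First Main Theorem, and absorbing the proximity/discrepancy terms caused by the mild singularities of $h_F$ — which are $O(\log T(r,f,\cH)+\log r)$ by a Green--Jensen/Borel estimate — I obtain
\[
c\,c_0\,T(r,f,\cH)\ \le\ T\big(r,f_1,\mathcal{O}_{X_1}(1)\big)+O\big(N^{(1)}(r,f,D)\big)+O\big(\log T(r,f,\cH)+\log r\big).
\]

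\textbf{Step 3 (tautological inequality and conclusion).} McQuillan's logarithmic tautological inequality applied to $f_1$ — the case $B\ne\mathbb{C}$ being reduced to entire curves by the usual ramified-covering device, which is what produces $N_{\mathrm{Ram}(\sigma)}(r)$ — gives
\[
T\big(r,f_1,\mathcal{O}_{X_1}(1)\big)\ \le_{\mathrm{exc}}\ N^{(1)}(r,f,D)+N_{\mathrm{Ram}(\sigma)}(r)+O\big(\log T(r,f,\cH)+\log r\big).
\]
Plugging this into Step 2 gives $T(r,f,\cH)\le_{\mathrm{exc}}c_1\big(N^{(1)}(r,f,D)+N_{\mathrm{Ram}(\sigma)}(r)\big)+O(\log T(r,f,\cH)+\log r)$ for some $c_1=c_1(X,D,\psi)$. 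Since $\cH$ is ample, $C\cH-K_X(D)$ is very ample for $C\gg 0$, whence $T(r,f,K_X(D))\le C\,T(r,f,\cH)+O(\log T(r,f,\cH)+\log r)$; substituting yields the theorem with $c_\psi:=C c_1$, and transporting the estimate back through the log resolution affects only the constant and the $O(\log r)$ term.

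\textbf{Main obstacle.} The heart of the matter is Step 2 together with the positivity assertion of Step 1. One must (i) make the implication ``quasi-finite classifying map $\Rightarrow$ $F>0$ everywhere on $U$'' precise at the level of the Viehweg--Zuo Higgs sheaf — only quasi-finiteness, not immersivity, is hypothesized, so a genuinely higher-order iterate of the Higgs field has to be engaged in the disc argument — and (ii) control the logarithmic asymptotics of $h_F$ along $\pi^{-1}(D)$ sharply enough that the boundary contributions are really of size $O(\log T(r,f,\cH)+\log r)$ plus the allowed truncated term $N^{(1)}(r,f,D)$; this is where Schmid's nilpotent orbit asymptotics and a careful Green--Jensen bookkeeping become unavoidable. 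Matching the curvature inequality with the \emph{canonical} lift $f_1$, so that $f_1^*(\sn\,\Theta_{h_F}(\mathcal{O}_{X_1}(1)))$ is exactly $\sn\,\partial\bar\partial\log(f^*F)$, is the technical glue. Compared with the big Picard theorem of \cite{DLSZ}, the only genuinely new ingredient is that the Ahlfors--Schwarz step there is here replaced by the quantitative pairing with McQuillan's tautological inequality.
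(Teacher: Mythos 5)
There is a genuine gap, and it sits exactly where you flagged it: Step~1's claim that quasi-finiteness of the classifying map makes the Viehweg--Zuo Finsler pseudometric positive definite at every point of $U$, and even uniformly bounded below by a fixed K\"ahler form $\omega\in c_1(\cH)$. The Viehweg--Zuo construction only guarantees that $\rho^{d-1,1}\circ\tau^1:\,T_X(-\log D)\to A^{-1}\otimes E^{d-1,1}$ is \emph{generically} injective; the comparison maps $\rho^{p,q}$ can kill tangent directions along a proper subvariety of $U$ even when the Kodaira--Spencer map (hence the classifying map) is nondegenerate there, so your disc/jet argument (``$F(v)=0$ $\Rightarrow$ classifying map constant along a disc tangent to $v$'') does not go through: vanishing of the pseudometric in a direction does not imply vanishing of the Kodaira--Spencer class. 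This is precisely why in \cite{DLSZ} and in Deng's hyperbolicity work the degeneracy locus of the VZ metric inside $U$ requires extra arguments; a uniform bound $F\geq c_0\,\omega$ over $U$ is not available from the construction. Once Step~1 fails, Step~2 collapses quantitatively: the singular metric $h_F$ on $\mathcal{O}_{X_1}(-1)$ is singular not only over $\pi^{-1}(D)$ but also over the degeneracy locus inside $U$, and the Green--Jensen bookkeeping then produces counting terms along that locus which are neither $O(\log T(r,f,\cH)+\log r)$ nor absorbed by $N^{(1)}(r,f,D)$; likewise the lower bound $c\,c_0\,T(r,f,\cH)$ on the left-hand side is lost wherever $F$ degenerates along $f$.

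The paper's proof avoids needing any pointwise positivity or Ahlfors--Schwarz-type curvature bound. It works along the given curve: the iterated Higgs maps, lifted to $\mathbb{P}(T_X(-\log D))$, give maps $\zeta^q:\,\gamma'^*\mathcal{O}(-q)\to A_{\gamma}^{-1}\otimes G^{d-q,q}$ into a sub-Higgs bundle $G\subset E_\gamma$ built as saturations of their images; only the \emph{nonvanishing} of $\zeta^1$ (from generic injectivity) is used, not nondegeneracy everywhere. Semi-negativity of $\Theta(\mathrm{det}\,G,h)$ (Griffiths curvature of Hodge bundles), boundedness of the comparison functions $\phi_q$, and Schmid/Mumford estimates at the boundary yield the current inequality $\frac{m+1}{2}f'^*c_1(\mathcal{O}(-1))\leq f^*c_1(A^{-1})+\frac{1}{m}\sum_q q\,f'^*\hess\log\phi_q$ on $B$; integrating and applying McQuillan's tautological inequality bounds $T(r,f,A)$ by $\frac{d+1}{2}\left(N^{(1)}(r,f,D)+N_{\mathrm{Ram}(\sigma)}(r)\right)$ up to the allowed error, and $K_X(D)\leq kA$ finishes. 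If you want to salvage your route, you would have to either prove everywhere-nondegeneracy under the quasi-finiteness hypothesis (which is the hard open-ended part) or, as the paper does, restructure the argument so that only generic nonvanishing along the curve is needed.
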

In section~\ref{VZ} we will recall that there is an ample line bundle $A$ over the base space $X$ which is closely related to the direct image sheaf of the family.
The second main theorem stated above actually follows from an inequality about the Nevanlinna characteristic function $T(r,f,A)$ of $A$:
\begin{thmx}[= Theorem~\ref{nevan-ineq}]
Let $(X,D)$ be the same as in Theorem~\ref{thm-A}. Then for any holomorphic curve $f:\, B \to X$ which is not contained in the support of $D$, we have
\[
T(r,f,A) \leq_{\mathrm{exc}} \frac{d+1}{2} \left( N^{(1)}(r,f,D) +N_{\mathrm{Ram}(\sigma)}(r) \right) + O(\log T(r,f,\cH) + \log r).
\]
where $d$ is the fiber dimension of the family $\psi$.  
\end{thmx}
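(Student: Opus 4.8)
The plan is to combine the Higgs-theoretic geometry of the Viehweg--Zuo construction of $\psi$ (set up in section~\ref{VZ}) with McQuillan's tautological inequality (recalled in section~\ref{tauto-ineq}), in the spirit of \cite{DLSZ}. I would start from the logarithmic Viehweg--Zuo Higgs sheaf $(\mathcal{E},\theta)=\bigl(\bigoplus_{p+q=d}\mathcal{E}^{p,q},\theta\bigr)$ attached to the polarized variation of Hodge structure $R^{d}\psi_{*}\C$ of the family (after the cyclic cover, base change and blow-ups performed there), together with the ample line bundle $A$, the inclusion $A\hookrightarrow\mathcal{E}^{d,0}$, and the iterated logarithmic Higgs field $\theta^{(j)}\colon\mathcal{E}^{d,0}\to\mathcal{E}^{d-j,j}\otimes\Sym^{j}\Omega^{1}_{X}(\log D)$, which is well defined because $\theta\wedge\theta=0$. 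Quasi-finiteness of the classifying map $U\to\mathcal{M}_{h}$ means the family has maximal variation, so the Kodaira--Spencer map $\theta^{(1)}$ is injective over a dense Zariski-open subset of $X$. Feeding this into the Hodge-metric computation --- in particular the semi-negativity of the Hodge metric on the kernel of $\theta$ and the $p\leftrightarrow q$ symmetry of a polarized weight-$d$ Hodge structure --- one obtains, as in \cite{DLSZ} and section~\ref{VZ}, a negatively curved Finsler pseudo-metric on $T_{X}(-\log D)$; equivalently, the numerical statement that along every nonconstant holomorphic curve the characteristic of $A$ is dominated by $\tfrac{d+1}{2}$ times the tautological characteristic of $\Omega^{1}_{X}(\log D)$, up to contributions concentrated on $D$. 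The coefficient $\tfrac{d+1}{2}$ reflects the length $d+1$ of the Viehweg--Zuo flag together with a factor $\tfrac12$ gained from the Hodge symmetry; pinning it down is the substance of section~\ref{VZ}.

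Given that, I would fix a nonconstant $f\colon B\to X$ with $f(B)\not\subset\mathrm{Supp}\,D$ and let $f_{1}\colon B\to\mathbb{P}(\Omega^{1}_{X}(\log D))$ be its canonical lift, defined through the codifferential $\mathrm{d}f^{\vee}\colon f^{*}\Omega^{1}_{X}(\log D)\to\Omega^{1}_{B}(\log R)$, where $R$ is a suitable effective reduced divisor on $B$ supported on the ramification locus of $\sigma$ and on $f^{-1}(\mathrm{Supp}\,D)$ (after resolving the critical points of $f$); thus $f_{1}^{*}\mathcal{O}_{\mathbb{P}(\Omega^{1}_{X}(\log D))}(1)$ is the line subbundle of $\Omega^{1}_{B}(\log R)$ that is the image of $\mathrm{d}f^{\vee}$. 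Since $\psi$ has maximal variation and $f$ is nonconstant, $f^{*}\psi$ is non-isotrivial, so there is a largest $j_{0}$ with $1\le j_{0}\le d$ for which $\theta^{(j_{0})}$, composed with $f^{*}$ and with $\Sym^{j_{0}}(\mathrm{d}f^{\vee})$, does not vanish identically; by the maximality of $j_{0}$ the image of $f^{*}A$ in $f^{*}\mathcal{E}^{d-j_{0},j_{0}}$ lies in $\ker\theta$ along $f$ and is therefore semi-negatively curved for the Hodge metric. Taking Nevanlinna characteristic functions along the resulting chain of sheaf maps, using this curvature sign to absorb the $\mathcal{E}^{d-j_{0},j_{0}}$-contribution into $O\bigl(\log T(r,f,\cH)+\log r\bigr)$ plus a controlled multiple of $N^{(1)}(r,f,D)$, and invoking the numerical statement above to lower the coefficient of $T\bigl(r,f_{1},\mathcal{O}_{\mathbb{P}(\Omega^{1}_{X}(\log D))}(1)\bigr)$ from the crude value $j_{0}\le d$ to $\tfrac{d+1}{2}$, one arrives at
\[
T(r,f,A)\ \leq_{\mathrm{exc}}\ \frac{d+1}{2}\,T\bigl(r,f_{1},\mathcal{O}_{\mathbb{P}(\Omega^{1}_{X}(\log D))}(1)\bigr)+O\bigl(\log T(r,f,\cH)+\log r\bigr).
\]
McQuillan's tautological inequality for a holomorphic curve from a Riemann surface finite over $\C$ then bounds $T\bigl(r,f_{1},\mathcal{O}_{\mathbb{P}(\Omega^{1}_{X}(\log D))}(1)\bigr)$ by $N^{(1)}(r,f,D)+N_{\mathrm{Ram}(\sigma)}(r)+O\bigl(\log T(r,f,\cH)+\log r\bigr)$, and substituting yields the assertion.

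The real work is in the first paragraph together with extracting the \emph{sharp} coefficient $\tfrac{d+1}{2}$, and I expect two delicate points. First, the iterated Higgs fields $\theta^{(j)}$ are injective only over a dense open subset, so their degeneracy locus $Z$ may be strictly larger than $D$ and a priori $f(B)$ could lie inside $Z$; I would deal with this by always working with the minimal level at which $\theta^{(j)}$ remains nonzero along $f$ and by descending induction on the Higgs flag, using at each step that the terminal graded piece along $f$ lies in $\ker\theta$ and hence is semi-negatively curved. Second, keeping the coefficient equal to $\tfrac{d+1}{2}$ rather than the cruder $d$ obtained by naive iteration forces a careful comparison of the Hodge metric, the Viehweg--Zuo Finsler metric and a fixed smooth metric on $A$ near the boundary $D$ via the parabolic (Deligne extension) formalism, so that the curvature contributions concentrated on $D$ are genuinely absorbed into the term $N^{(1)}(r,f,D)$ supplied by McQuillan's inequality and do not inflate the coefficient. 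The remaining ingredients --- the tautological inequality for maps from a general Riemann surface finite over $\C$, which is where $N_{\mathrm{Ram}(\sigma)}(r)$ enters, and the standard calculus lemmas for Nevanlinna characteristic functions --- are available from section~\ref{tauto-ineq}.
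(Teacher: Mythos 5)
Your overall architecture matches the paper's: pull back the Viehweg--Zuo Higgs data along the curve, exploit the semi-negativity of the Hodge metric on the kernel of the Higgs field, convert this into a curvature/characteristic-function inequality between $A$ and the tautological bundle on $\mathbb{P}(T_X(-\log D))$, and finish with McQuillan's tautological inequality. But there is a genuine gap exactly at the point you yourself flag as ``the real work'': the coefficient $\tfrac{d+1}{2}$. Your explanation --- a factor $\tfrac12$ ``gained from the $p\leftrightarrow q$ symmetry of a polarized weight-$d$ Hodge structure,'' to be pinned down by ``a careful comparison of metrics near the boundary via the parabolic formalism'' --- is not the actual mechanism and would not produce the constant. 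Your single-step argument at the maximal level $j_0$ (the terminal graded piece lying in $\ker\theta$, hence semi-negatively curved) only yields $T(r,f,A)\leq j_0\,T(r,f',\mathcal{O}(1))+\cdots$ with $j_0\leq d$, i.e.\ the crude coefficient $d$, and no amount of metric comparison near $D$ or absorption into $N^{(1)}(r,f,D)$ improves $d$ to $\tfrac{d+1}{2}$. What the paper actually does is use \emph{all} the intermediate pieces simultaneously: it defines $G^{d-q,q}$ as the saturation of the image of $A\otimes T_\Sigma^{\otimes q}$ in $E^{d-q,q}_\gamma$, checks that $(G,\theta_G)=\bigl(\bigoplus_q G^{d-q,q},\theta_\gamma|_G\bigr)$ is a sub-Higgs bundle, and observes that $\det G$ lands in the kernel of the Higgs field of $\bigwedge^r(E_\gamma,\theta_\gamma)$ (nilpotency of $\theta_G$), so $\Theta(\det G,h)\leq 0$ by Griffiths' computation (Proposition~\ref{curv-G}). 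Summing the one-step current inequalities $q\,\gamma'^*c_1(\mathcal{O}(-1))\leq \gamma^*c_1(A^{-1})+\Theta(G^{d-q,q},h)+q\,\gamma'^*\hess\log\phi_q$ over $q=1,\dots,m$ turns the left-hand side into $\tfrac{m(m+1)}{2}$ times the tautological curvature while the curvature terms add up to $\Theta(\det G,h)\leq 0$; dividing by $m$ gives the coefficient $\tfrac{m+1}{2}\leq\tfrac{d+1}{2}$. The $\tfrac12$ is purely the arithmetic of $1+2+\cdots+m$, not Hodge symmetry, and this summation step is entirely absent from your proposal.

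A second, smaller discrepancy concerns the boundary. You propose to let the curvature contributions concentrated on $D$ be ``absorbed into the term $N^{(1)}(r,f,D)$.'' The paper instead shows there is nothing to absorb: after a finite cover $\delta\colon B'\to B$ making the local monodromy unipotent (Borel), Schmid's norm estimates show the Hodge metric on $\delta^*\det G$ is good in Mumford's sense, so the inequality $\Theta(\det G,h)\leq 0$ holds as an inequality of currents across the punctures $f^*D$, and hence the summed current inequality descends to all of $B$; the only error terms surviving integration are the $\hess\log\phi_q$ terms, which are $O(\log r)$ by Green--Jensen since the $\phi_q$ are bounded. If you tried instead to trade boundary curvature for a multiple of $N^{(1)}(r,f,D)$, you would have to verify that the multiple does not exceed $\tfrac{d+1}{2}$, which is precisely the kind of loss the paper's argument is designed to avoid. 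So: right skeleton, same two main ingredients, but the decisive step producing $\tfrac{d+1}{2}$ (the determinant-of-$G$ summation) and the correct treatment of the boundary current are missing.
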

This inequality should be regarded as an analytic version of the Arakelov type inequality of M\"oller-Viehweg-Zuo in \cite[Theorem~0.3]{MVZ-06}.\\

We have two applications. The first one generalizes a classical result of Nadel {\cite[Theorem~0.2]{nad89}}:
\begin{thmx}[= Corollary~\ref{nadel}]
Let $(X,D)$ be as in Theorem~\ref{thm-A}. Then for any nonconstant entire curve $f:\,\mathbb{C} \to X$ which ramifies over $D$ with order at least $c$, i.e. a positive constant which only depends on $(X,D)$ and the family $\psi$ such that $f^*D \geq c\cdot \mathrm{Supp}\,f^*D$, we have $f(\mathbb{C}) \subset D$.     
\end{thmx}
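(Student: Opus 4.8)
The plan is to argue by contradiction. Fix the constant $c$ (specified below), suppose $f:\mathbb{C}\to X$ is a nonconstant entire curve with $f^*D\geq c\cdot\mathrm{Supp}\,f^*D$, and assume, contrary to the claim, that $f(\mathbb{C})\not\subset D$. Then $f$ falls under Theorem~\ref{nevan-ineq} with $B=\mathbb{C}$ and $\sigma=\mathrm{id}_{\mathbb{C}}$, so that $N_{\mathrm{Ram}(\sigma)}(r)\equiv 0$ and
\[
T(r,f,A)\leq_{\mathrm{exc}}\tfrac{d+1}{2}N^{(1)}(r,f,D)+O(\log T(r,f,\cH)+\log r).
\]
The ramification hypothesis says precisely that $\mathrm{mult}_z(f^*D)\geq c$ at every $z\in\mathbb{C}$ with $f(z)\in D$, hence $N^{(1)}(r,f,D)\leq\frac1c N(r,f,D)$; by the First Main Theorem $N(r,f,D)\leq T(r,f,\mathcal{O}_X(D))+O(1)$; and since $A$ is ample the standard comparison of Nevanlinna characteristic functions gives an integer $m_0\geq 1$, depending only on $(X,D)$ and $A$, with $T(r,f,\mathcal{O}_X(D))\leq m_0\,T(r,f,A)+O(1)$ and $T(r,f,\cH)\leq m_1\,T(r,f,A)+O(1)$ for some $m_1$.

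Feeding these bounds into the displayed inequality and setting
\[
c:=(d+1)\,m_0,
\]
a constant that depends only on $(X,D)$ and $\psi$ (recall $A$ is built from $\psi$ in Section~\ref{VZ}, and $d$ is the fibre dimension of $\psi$), we obtain $T(r,f,A)\leq_{\mathrm{exc}}\frac12 T(r,f,A)+O(\log T(r,f,A)+\log r)$, that is,
\[
T(r,f,A)\leq_{\mathrm{exc}}O(\log T(r,f,A)+\log r).
\]
As $f$ is nonconstant and $A$ is ample, $T(r,f,A)\to\infty$, so the usual Borel-type lemma for removing exceptional sets forces $T(r,f,A)=O(\log r)$. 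A holomorphic map $\mathbb{C}\to\mathbb{P}^N$ with logarithmic characteristic growth has polynomial coordinates, so embedding $X$ via a multiple of $A$ shows that $f$ extends to a morphism $\bar f:\mathbb{P}^1\to X$. Since $f$ is nonconstant and $f(\mathbb{C})\not\subset D$, the image $C:=\bar f(\mathbb{P}^1)$ is a rational curve with $C\not\subset D$; set $S:=\bar f^{-1}(D)$, a finite subset of $\mathbb{P}^1$, and $k:=\#S$. Then $\bar f$ is finite onto $C$, so over $\mathbb{P}^1\setminus S$ we have the pulled-back family, which is smooth and whose classifying map to $\mathcal{M}_h$ is quasi-finite, hence nonconstant.

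It remains to rule out such a $\bar f$, and this is the crux. By the algebraic counterpart of Theorem~\ref{nevan-ineq} — an Arakelov-type inequality of M\"oller--Viehweg--Zuo, cf. \cite[Theorem~0.3]{MVZ-06}, phrased through the line bundle $A$ — applied to the family over $\mathbb{P}^1\setminus S$, one gets
\[
0<\deg\bar f^*A\leq\tfrac{d+1}{2}\deg\Omega^1_{\mathbb{P}^1}(\log S)=\tfrac{d+1}{2}(k-2),
\]
so in particular $k\geq 3$. On the other hand, every point of $S\cap\mathbb{C}$ contributes at least $c$ to $\bar f^*D$ by the ramification hypothesis, and at most one point of $S$ lies outside $\mathbb{C}$, whence $\deg\bar f^*\mathcal{O}_X(D)\geq c(k-1)$; combining with $\deg\bar f^*\mathcal{O}_X(D)\leq m_0\deg\bar f^*A\leq m_0\frac{d+1}{2}(k-2)$ and the choice $c=(d+1)m_0$ gives $2(k-1)\leq k-2$, i.e. $k\leq 0$, contradicting $k\geq 3$. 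Hence no such $f$ exists, which is the assertion. The main obstacle is exactly this passage from the Nevanlinna estimate to a genuine contradiction: Theorem~\ref{nevan-ineq}, even after absorbing the ramification of $f$ over $D$, only yields that $f$ is rational, and defeating the term $\deg\Omega^1_{\mathbb{P}^1}(\log S)=k-2$ forces one to work on the compactification $\mathbb{P}^1$ with the \emph{algebraic} Arakelov inequality and to track the high multiplicities of $\bar f^*D$ carefully — it is this bookkeeping that pins down the size of $c$ in terms of $A$, $D$ and $d$; a minor additional point is to arrange that the threshold on $c$ from the Nevanlinna step and that from the Arakelov step coincide, which is why $c=(d+1)m_0$ is chosen with a factor of $2$ to spare.
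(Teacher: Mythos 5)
Your first half runs parallel to the paper's own argument: apply Theorem~\ref{nevan-ineq} with $B=\mathbb{C}$, use the hypothesis $f^*D\geq c\cdot\mathrm{Supp}\,f^*D$ to bound $N^{(1)}(r,f,D)\leq \frac1c N(r,f,D)$, then the Nevanlinna inequality and ampleness of $A$ to absorb everything into $T(r,f,A)$, arriving at $T(r,f,A)=O(\log r)$, i.e.\ $f$ would have to be rational. The divergence is in how you kill this last possibility, and that is where the gap sits. You invoke \cite[Theorem~0.3]{MVZ-06} ``phrased through the line bundle $A$'' to assert $0<\deg \bar f^*A\leq\frac{d+1}{2}(k-2)$. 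That theorem is an Arakelov inequality for the direct image sheaf of a (semistable) family over the curve itself; it does not, as stated, bound $\deg\bar f^*A$ and certainly not with the constant $\frac{d+1}{2}$. To extract what you need one must (i) identify $\bar f^*\det(\psi_*\omega^{\mu}_{V/U})^{\nu}$ with the Viehweg sheaf of the pulled-back family (a base-change statement), (ii) compare the extension $A$ chosen over $X$ with the natural extension over $\mathbb{P}^1$, and (iii) pass to a semistable model via a cover of $\mathbb{P}^1$ ramified over $S$ and control the log degree by Riemann--Hurwitz; the constant coming out of this depends on $\mu$, $\nu$ and the rank of the direct image, not just on $d$. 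Since your final arithmetic ($2(k-1)\leq k-2$) hinges on the Nevanlinna constant and the Arakelov constant matching, the contradiction is not established as written. It is patchable --- prove a genuine weak-boundedness inequality $\deg\bar f^*A\leq C(k-2)$ and enlarge $c$ beyond $m_0C$, which the statement permits --- but that is a substantive algebraic input, not a one-line citation.

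The paper sidesteps all of this with a device you missed: the hypotheses of the corollary are stable under replacing $f$ by $f\circ\exp$ (the exponential is unramified, so the multiplicities of $f^*D$ are preserved, $f\circ\exp$ is still nonconstant, and its image is not contained in $D$), whereas the conclusion $T(r,f,A)=O(\log r)$ is not stable: $f\circ\exp$ cannot be rational when $f$ is nonconstant, since a rational map has finite generic fibers while $\exp$ has infinite ones. Applying the same estimate to $f\circ\exp$ therefore yields the contradiction immediately, with no discussion of rational curves and no appeal to any algebraic Arakelov inequality. You correctly identified the passage from the logarithmic growth bound to an actual contradiction as the crux, but resolved it with a heavier and, in the form given, insufficiently justified tool.
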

See Remark~\ref{rmk-nad} about how to remove the assumption on the entire curves by taking ramified covers of $X$ in the case of family of abelian varieties.\\[.2cm]
The second one gives an explicit constant for $c_{\psi}$ in Theorem~\ref{thm-A} in the case of Siegel modular varieties:
\begin{thmx}[= Corollary~\ref{siegel}]\label{thm-D}
Let $\mathcal{A}^{[n]}_g$ be the moduli space of principally polarized abelian varieties with level-$n$ structure ($n\geq 3$). Denote by $\bar{\mathcal{A}}^{[n]}_g$ the smooth compactification of $\mathcal{A}^{[n]}_g$ and $D:= \bar{\mathcal{A}}^{[n]}_g \setminus \mathcal{A}^{[n]}_g$ is the normal crossing boundary divisor. Let $f:\, B \to \bar{\mathcal{A}}^{[n]}_g$ be a holomorphic curve with $f(B) \not\subset \mathrm{Supp}\,D$. 
Then we have
\[
T(r,f,K_{\bar{\mathcal{A}}^{[n]}_g}(D)) \leq_{\mathrm{exc}} \frac{(1+g)^2}{2} \left( N^{(1)}(r,f,D) + N_{\mathrm{Ram}(\sigma)}(r) \right) + O(\log T(r,f,\cH) + \log r).
\]     
\end{thmx}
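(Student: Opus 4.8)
The plan is to specialize the second main theorem of Theorem~\ref{thm-A} --- more precisely, its Hodge-theoretic refinement Theorem~\ref{nevan-ineq} --- to the universal family of principally polarized abelian varieties, and then to make the constant explicit via Mumford's proportionality theorem. The first step is to check that the hypotheses of Theorem~\ref{thm-A} hold for the universal abelian scheme $\psi:\, \mathcal{V}\to\mathcal{A}^{[n]}_g$: since $n\geq 3$ the moduli functor of principally polarized abelian varieties with level-$n$ structure is represented by the smooth quasi-projective variety $\mathcal{A}^{[n]}_g$ and carries a universal family; each fiber is an abelian variety, whose canonical sheaf is trivial and hence semi-ample, and whose Hilbert polynomial with respect to the principal polarization is $h(m)=m^g$; and the induced classifying map $\mathcal{A}^{[n]}_g\to\mathcal{M}_h$ is quasi-finite, since a polarized abelian variety admits only finitely many level-$n$ structures. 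Thus Theorem~\ref{nevan-ineq} applies with fiber dimension $d=g$ and yields, for the ample line bundle $A$ on $\bar{\mathcal{A}}^{[n]}_g$ produced by the Viehweg--Zuo construction of section~\ref{VZ},
\[
T(r,f,A)\leq_{\mathrm{exc}}\frac{g+1}{2}\Bigl(N^{(1)}(r,f,D)+N_{\mathrm{Ram}(\sigma)}(r)\Bigr)+O(\log T(r,f,\cH)+\log r).
\]

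The second step is to compare $A$ with $K_{\bar{\mathcal{A}}^{[n]}_g}(D)$. Write $\mathbb{E}=\psi_*\Omega^1_{\mathcal{V}/\mathcal{A}^{[n]}_g}$ for the Hodge bundle, $\bar{\mathbb{E}}$ for its Deligne canonical extension to the toroidal compactification, and $\bar\lambda=\det\bar{\mathbb{E}}$ for the extended Hodge line bundle. For an abelian scheme the relative invariant differentials give an isomorphism $\Omega^1_{\mathcal{V}/\mathcal{A}^{[n]}_g}\cong\psi^*\mathbb{E}$, so $\omega_{\mathcal{V}/\mathcal{A}^{[n]}_g}=\psi^*\bar\lambda$; feeding this into the construction of section~\ref{VZ} shows that, with the normalization chosen there, $A=\bar\lambda$ in $\mathrm{NS}(\bar{\mathcal{A}}^{[n]}_g)\otimes\mathbb{Q}$. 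On the other hand Mumford's proportionality theorem in the non-compact case gives, on the chosen smooth toroidal compactification with reduced normal crossing boundary $D$,
\[
K_{\bar{\mathcal{A}}^{[n]}_g}(D)=(g+1)\,\bar\lambda=(g+1)\,A .
\]
Combining this with the inequality of the first step, and using the $\mathbb{Q}$-linearity of the characteristic function up to a bounded term together with $\log T(r,f,\cH')=O(\log T(r,f,\cH))$ for any two ample line bundles, we get
\[
T\bigl(r,f,K_{\bar{\mathcal{A}}^{[n]}_g}(D)\bigr)=(g+1)\,T(r,f,A)+O(1)\leq_{\mathrm{exc}}\frac{(1+g)^2}{2}\Bigl(N^{(1)}(r,f,D)+N_{\mathrm{Ram}(\sigma)}(r)\Bigr)+O(\log T(r,f,\cH)+\log r),
\]
which is the assertion.

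Verifying the hypotheses and manipulating the characteristic functions is routine; the step requiring care --- and the one I expect to be the real obstacle --- is the bookkeeping in the comparison. One must confirm that the line bundle $A$ that section~\ref{VZ} attaches to the universal abelian family is \emph{exactly} $\bar\lambda$ up to numerical equivalence, since any stray multiplicative factor would change the constant, and one must make sure that Mumford's identity $K+D=(g+1)\bar\lambda$ is applied to the same smooth compactification with $D$ genuinely reduced, which forces the toroidal fan to be taken fine enough. Both facts are classical (see Mumford, \emph{Hirzebruch's proportionality theorem in the non-compact case}, and Faltings--Chai); for $g=1$ the identity is just the statement that weight-two cusp forms for $\Gamma(n)$ are the sections of $K_{\overline{X(n)}}(\mathrm{cusps})=2\bar\lambda$.
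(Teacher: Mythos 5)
Your proposal is correct and follows essentially the same route as the paper: apply Theorem~\ref{nevan-ineq} to the universal family (so $d=g$), identify the Viehweg line bundle $A$ with the Hodge line bundle $\det E^{1,0}=E^{g,0}=\bar\lambda$, and use $K_{\bar{\mathcal{A}}^{[n]}_g}(D)\cong\bar\lambda^{\otimes(g+1)}$ to convert the bound on $T(r,f,A)$ into the stated one. The only cosmetic difference is that the paper derives $K(D)=(g+1)\bar\lambda$ from the isomorphism $\Omega^1_{\bar{\mathcal{A}}^{[n]}_g}(\log D)\cong\Sym^2E^{1,0}$ by taking determinants, while you cite Mumford's noncompact proportionality theorem, which is the same identity.
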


Note that in {\cite[Theorem~B]{RT18}} Rousseau and Touzet found a better constant for $c_{\psi}$ in the case of Hilbert modular varieties. For the $n$ dimensional Hilbert modular variety $(X,D)$ they proved that
\[
T(r,f,K_X(D)) \leq n\,N^{(1)}(r,f,D) + O(\log T(r,f,\cH) + \log r).
\] 
It is naturally to ask if one can improve these constants $c_{\psi}$ in Theorem~\ref{thm-A} and Theorem~\ref{thm-D}.\\

Just before this notes is finished, Brotbek and Brunebarbe uploaded a preprint \cite{BB20} on a similar theme of second main theorem type. Our setting and approach are different however: they consider base spaces which carry polarized VHS while we consider base spaces which carry families of polarized varieties; they use the Griffiths-Schmid metric and the logarithmic derivative lemma while we use the Finsler (pseudo)metric and the tautological inequality. The common feature is that we both use certain ample line bundle $A$ on $X$ and obtain an inequality of the Nevanlinna characteristic function of $A$ ( it is called the Arakelov-Nevanlinna inequality in \cite{BB20}, see Theorem~1.1 loc. cit.). In \cite{BB20} they used the Griffiths line bundle associated to the VHS, and we use the Viehweg line bundle $A$ associated to the family.\\

\noindent{\bf Acknowledgment.} This paper grew out of discussions about the second main theorem on moduli spaces with Professor Steven Shin-Yi Lu during the preparation of \cite{DLSZ}. I am very grateful for the inspiring discussions with him, as well as invaluable comments and suggestions he provided. I would like to sincerely thank Professor Kang Zuo for explaining his celebrated work with Viehweg to me, and for his constant supports and encouragements.

\section{Viehweg-Zuo construction and curvature current inequality}\label{VZ}
In this section we recall briefly the Viehweg-Zuo's construction and use it to obtain an inequality of curvature currents on the Riemann surface $B$.\\

We first define the open Riemann surface $\Sigma:= B \setminus f^*D$. So the restriction is a holomorphic curve mapping into the base space $\gamma:\, \Sigma \to U=X \setminus D$.

By the theory of Viehweg-Zuo (cf. {\cite[\S6]{VZ-1}} or {\cite[\S4]{VZ-2}}), for the smooth log pair $(X,D)$ as in Theorem~\ref{thm-A}, we have the following geometric objects over $X$: an ample line bundle $A$ whose restriction on the smooth locus $U$ is isomorphic to the Viehweg line bundle $\mathrm{det}(\psi_*\omega^{\mu}_{Y/X})^{\nu}$ (cf. {\cite[\S4]{VZ-1}}), the deformation Higgs bundel $(F,\tau)$ associated to the family $g$, a logarithmic Hodge bundle $(E,\theta)$ over $X$ with poles along $D+T$ ($T$ is some normal crossing divisor), and the comparison maps $\rho$ which fits into the following commutative diagram
\begin{align}
  \label{eq:3}
\xymatrixcolsep{5pc}\xymatrix{
F^{p,q} \ar[r]^-{\tau^{p,q}} \ar[d]^{\rho^{p,q}} & F^{p-1,q+1} \otimes \Omega^1_{X}(\log D) \ar[d]^{\rho^{p-1,q+1} \otimes \iota} \\
A^{-1} \otimes E^{p,q} \ar[r]^-{\mathrm{id} \otimes \theta^{p,q}} &  A^{-1} \otimes E^{p-1,q+1} \otimes \Omega^1_{X}(\log (D+T)).
} 
\end{align}
We iterate the Higgs maps $\tau^{p,q}$ to get
\[
\tau^{d-q+1,q-1} \circ \cdots \circ \tau^{d,0} :\, F^{d,0} \to F^{d-q,q} \otimes \bigotimes^q \Omega^1_{X}(\log D).
\]
where $d$ is the fiber dimension of the family $g$.
This composition  factors through
\[
\tau^q:\, F^{d,0} \to F^{d-q,q} \otimes \Sym^q\, \Omega^1_{X}(\log D)
\]
because the Higgs field $\tau$ satisfies $\tau \wedge \tau=0$. As $\mathcal{O}_X$ is a subsheaf of $F^{d,0}$, the composition of maps
\[
\xymatrix{
\Sym^q \, T_{X}(-\log D) \ar[r]^-{\subset} & F^{d,0} \otimes \Sym^q \, T_X(-\log D) \ar[r]^-{\tau^q \otimes \mathrm{id}}  &  F^{d-q,q} \otimes \Sym^q\, \Omega^1_X(\log D) \otimes \Sym^q \, T_X(-\log D) \ar[d]^{\mathrm{id} \otimes <,>} \\
  &  &  F^{d-q,q}
}
\]
makes sense, which we will still denote as $\tau^q$ by abuse of notations.
It is a remarkable fact that 
\[
\rho^{d-1,1}\circ \tau^1:\, T_{X}(-\log D) \to A^{-1} \otimes E^{d-1,1}
\]
is generically injective.\\

Now we put the holomorphic curve $\gamma:\, \Sigma \to U$ into the picture. 
We first note that the pull back of $(F,\tau)$ induces a new (holomorphic) Higgs bundle over $\Sigma$:
\[
(F_{\gamma},\tau_{\gamma})\quad \textrm{with $F_{\gamma}:= \gamma^*F$ and $\tau_{\gamma}:\, F_{\gamma} \xrightarrow{\gamma^*\tau} F_{\gamma} \otimes \gamma^*\Omega^1_X(\log D) \xrightarrow{\mathrm{id} \otimes d\gamma} F_{\gamma} \otimes \Omega^1_{\Sigma}$}.
\]
We define $(E_{\gamma},\theta_{\gamma})$ in the same way, emphasizing that $\theta_{\gamma}$ has log poles along $\gamma^*T$. Then the commutative diagram (\ref{eq:3}) also holds on $\Sigma$:
\begin{align}
  \label{eq:4}
\xymatrixcolsep{5pc}\xymatrix{
F^{p,q}_{\gamma} \ar[r]^-{\tau^{p,q}_{\gamma}} \ar[d]^{\rho^{p,q}_{\gamma}} & F^{p-1,q+1}_{\gamma} \otimes \Omega^1_{\Sigma} \ar[d]^{\rho^{p-1,q+1}_{\gamma} \otimes \iota} \\
A^{-1}_{\gamma} \otimes E^{p,q}_{\gamma} \ar[r]^-{\mathrm{id} \otimes \theta^{p,q}_{\gamma}} &  A^{-1}_{\gamma} \otimes E^{p-1,q+1}_{\gamma} \otimes \Omega^1_{\Sigma}(\log \gamma^*T)
} 
\end{align}
where $A_{\gamma}$ denotes $\gamma^*A$. Similarly, we can define the iteration of Higgs maps on $\Sigma$
\[
\tau^q_{\gamma}:\, T^{\otimes q}_{\Sigma} \to F^{d-q,q}_{\gamma}.
\]

Now we shall define a sub Higgs bundle of the holomorphic Hodge bundle $(E_{\gamma},\theta_{\gamma})$ by using the iterations of Higgs maps. For each $q \in \mathbb{Z}_{\geq 0}$, we define $G^{d-q,q}$ as the \emph{saturation} of the image of 
\[
A \otimes T^{\otimes q}_{\Sigma} \xrightarrow{\mathrm{id} \otimes \tau^q_{\gamma}} A \otimes F^{d-q,q}_{\gamma} \to E^{d-q,q}_{\gamma}
\]
in $E^{d-q,q}_{\gamma}$.

\begin{lem}
$\theta^{d-q,q}_{\gamma}(G^{d-q,q}) \subset G^{d-q-1,q+1} \otimes \Omega^1_{\Sigma}$.  
\end{lem}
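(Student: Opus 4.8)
The plan is to verify the inclusion first on the subsheaf generated by the defining sections of $G^{d-q,q}$, by a chase through the commutative diagram \eqref{eq:4}, and then to promote it to the saturation using that $\theta_\gamma$ is $\calo_\Sigma$-linear. Write $\mathcal{G}^{d-q,q}_0$ for the image of the composite $A \otimes T^{\otimes q}_\Sigma \to A \otimes F^{d-q,q}_\gamma \to E^{d-q,q}_\gamma$ (built from $\mathrm{id}\otimes\tau^q_\gamma$ and $\mathrm{id}\otimes\rho^{d-q,q}_\gamma$, together with $A_\gamma\otimes A^{-1}_\gamma\cong\calo_\Sigma$), so that $G^{d-q,q}$ is the saturation of $\mathcal{G}^{d-q,q}_0$ in $E^{d-q,q}_\gamma$.

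First I would argue locally. Fix a small open set $W \subset \Sigma$ on which $T_\Sigma$ has a holomorphic frame $\partial$, with dual frame $\d z$ of $\Omega^1_\Sigma$, and on which $A_\gamma$ has a frame $a$. A local generator of $\mathcal{G}^{d-q,q}_0$ over $W$ has the shape $e = a \cdot \rho^{d-q,q}_\gamma\big(\tau^q_\gamma(v)\big)$ for $v$ a local section of $T^{\otimes q}_\Sigma$. Since $\theta_\gamma$ is $\calo_\Sigma$-linear, $\theta^{d-q,q}_\gamma(e) = a \cdot \theta^{d-q,q}_\gamma\big(\rho^{d-q,q}_\gamma(\tau^q_\gamma(v))\big)$, and applying commutativity of \eqref{eq:4} to $\tau^q_\gamma(v) \in F^{d-q,q}_\gamma$ rewrites this as $a \cdot (\rho^{d-q-1,q+1}_\gamma \otimes \iota)\big(\tau^{d-q,q}_\gamma(\tau^q_\gamma(v))\big)$.

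The heart of the matter is then to recognize $\tau^{d-q,q}_\gamma \circ \tau^q_\gamma$ as $\tau^{q+1}_\gamma$. By definition $\tau^{q+1}_\gamma$ is $\tau^q_\gamma$ followed by one further Higgs map, the identity $\tau_\gamma \wedge \tau_\gamma = 0$ ensuring that the iterate factors through the symmetric power; on the curve $\Sigma$ one has $\Sym^{q+1}\Omega^1_\Sigma = (\Omega^1_\Sigma)^{\otimes(q+1)}$, so the symmetrization is a harmless normalization and, over $W$, $\tau^{d-q,q}_\gamma(\tau^q_\gamma(v)) = \tau^{q+1}_\gamma(v \otimes \partial) \otimes \d z$ up to a nonzero constant. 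Substituting, $\theta^{d-q,q}_\gamma(e)$ equals, up to that constant, $a \cdot \rho^{d-q-1,q+1}_\gamma\big(\tau^{q+1}_\gamma(v \otimes \partial)\big) \otimes \d z$, which is exactly the image of $a \otimes v \otimes \partial$ under $A \otimes T^{\otimes(q+1)}_\Sigma \to E^{d-q-1,q+1}_\gamma$, tensored with $\d z \in \Omega^1_\Sigma$; that is, $\theta^{d-q,q}_\gamma(e) \in \mathcal{G}^{d-q-1,q+1}_0 \otimes \Omega^1_\Sigma$. Note that $\tau_\gamma$ is $\gamma^*\tau$ followed by $\d\gamma$ and $\gamma$ maps into $U = X \setminus D$, so $\tau_\gamma$ is holomorphic (pole free) on $\Sigma$; hence $\iota$ enters only as the inclusion $\Omega^1_\Sigma \hookrightarrow \Omega^1_\Sigma(\log\gamma^*T)$ and no logarithmic pole along $\gamma^*T$ is produced. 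Thus $\theta^{d-q,q}_\gamma(\mathcal{G}^{d-q,q}_0) \subset \mathcal{G}^{d-q-1,q+1}_0 \otimes \Omega^1_\Sigma \subset G^{d-q-1,q+1} \otimes \Omega^1_\Sigma$.

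It remains to pass from $\mathcal{G}^{d-q,q}_0$ to its saturation $G^{d-q,q}$. These two sheaves agree outside a finite set $Z \subset \Sigma$, so the inclusion already holds on $\Sigma \setminus Z$; since $G^{d-q-1,q+1}$ is saturated in the locally free sheaf $E^{d-q-1,q+1}_\gamma$, so is $G^{d-q-1,q+1}\otimes\Omega^1_\Sigma$ inside $E^{d-q-1,q+1}_\gamma\otimes\Omega^1_\Sigma$, and therefore the $\calo_\Sigma$-linear morphism from $G^{d-q,q}$ to the quotient $\big(E^{d-q-1,q+1}_\gamma\otimes\Omega^1_\Sigma(\log\gamma^*T)\big)\big/\big(G^{d-q-1,q+1}\otimes\Omega^1_\Sigma\big)$ induced by $\theta^{d-q,q}_\gamma$, being zero off $Z$ and valued in a sheaf that is torsion free away from $\gamma^*T$, already vanishes at every point of $Z$ not lying over $\gamma^*T$. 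I expect the single remaining point — excluding a logarithmic pole along $\gamma^*T$ at the finitely many points of $Z$ lying over $\gamma^*T$ — to be the only genuine obstacle: it should be settled by a local computation near such a point using the structure of $\theta_\gamma$ there (e.g. nilpotency of its residue along $\gamma^*T$), or, failing a clean argument, by remarking that this concerns only finitely many points of $\Sigma$ and hence does not affect the curvature estimates on $B$ for which the lemma will be used in the next section.
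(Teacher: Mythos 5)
Your core argument is exactly the paper's. The paper proves the lemma by pulling \eqref{eq:4} back along the generating map $A \otimes T^{\otimes q}_{\Sigma} \to A \otimes F^{d-q,q}_{\gamma} \to E^{d-q,q}_{\gamma}$ and chasing the resulting diagram, the key observation being that the bottom route $A \otimes T^{\otimes q}_{\Sigma} \to A \otimes F^{d-q-1,q+1}_{\gamma} \otimes \Omega^1_{\Sigma} \to E^{d-q-1,q+1}_{\gamma} \otimes \Omega^1_{\Sigma}$ is (after the contraction you make explicit, legitimate on a curve where $\Sym^{q+1}\Omega^1_\Sigma = (\Omega^1_\Sigma)^{\otimes(q+1)}$) the $(q+1)$-st iterate, so the image of the generating subsheaf lands in $G^{d-q-1,q+1}\otimes\Omega^1_\Sigma$ with no pole along $\gamma^*T$ because it factors through the deformation side $F_\gamma$, where $\tau_\gamma$ is pole-free ($\gamma$ avoids $D$). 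Your local-frame computation is just the paper's phrase \enquote{by the construction of $\tau^q_\gamma$} written out.

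Where you go beyond the paper is the saturation step, and the point you leave open is a genuine subtlety --- but one the paper elides rather than resolves: the paper asserts that the image of the bottom route \enquote{is exactly $G^{d-q-1,q+1}\otimes\Omega^1_\Sigma$}, silently identifying the image sheaf with its saturation, and offers no argument for passing from $\mathcal G_0^{d-q,q}$ to $G^{d-q,q}$. Your torsion-freeness argument settles this at every point away from $\gamma^*T$ (note only that the locus where $\mathcal G_0 \neq G$ is discrete rather than finite, since $\Sigma$ is non-compact; the argument is local, so this is harmless). At a point of $\gamma^*T$ where $\mathcal G_0^{d-q,q}$ is not saturated, the quotient $\bigl(E^{d-q-1,q+1}_{\gamma}\otimes\Omega^1_{\Sigma}(\log \gamma^*T)\bigr)/\bigl(G^{d-q-1,q+1}\otimes\Omega^1_{\Sigma}\bigr)$ has torsion, so the formal argument only yields $\theta(G^{d-q,q}) \subset G^{d-q-1,q+1}\otimes\Omega^1_{\Sigma}(\log\gamma^*T)$ there; nilpotency of the residue of $\theta_\gamma$ alone does not force the pole to cancel, and your fallback (\enquote{finitely many points do not affect the curvature estimates}) is not a proof of the lemma as stated. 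It is, however, consistent with how the lemma is actually used: even allowing log poles along $\gamma^*T$, nilpotency gives $\mathrm{tr}\,\theta_G = 0$, so $\det G$ still lies in the kernel of the Higgs field of $\bigwedge^r E_\gamma$, which is what Proposition~\ref{curv-G} needs. In short, you have reproduced and sharpened the paper's proof; the one step you could not close is precisely the step the paper's two-line proof passes over.
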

\begin{proof}
By (\ref{eq:4}) we have the following commutative diagram
\[
\xymatrix{
A \otimes T^{\otimes q}_{\Sigma} \ar[r] \ar[rd] & A \otimes F^{d-q,q}_{\gamma} \ar[r] \ar[d] &  E^{d-q,q}_{\gamma} \ar[d] \\
   &  A \otimes F^{d-q-1,q+1}_{\gamma} \otimes \Omega^1_{\Sigma} \ar[r] & E^{d-q-1,q+1}_{\gamma} \otimes \Omega^1_{\Sigma}(\log \gamma^*T).
}
\] 
By the construction of $\tau^q_{\gamma}$, we know that the image of 
\[
A \otimes T^{\otimes q}_{\Sigma} \to A \otimes F^{d-q-1,q+1}_{\gamma} \otimes \Omega^1_{\Sigma} \xrightarrow{\rho^{d-q-1,q+1}_{\gamma}\otimes \mathrm{id}} E^{d-q-1,q+1}_{\gamma} \otimes \Omega^1_{\Sigma}
\]
is exactly $G^{d-q-1,q+1} \otimes \Omega^1_{\Sigma}$. Then the statement follows from the commutativity of the diagram above.
\end{proof}

Therefore, $(G,\theta_G):= (\bigoplus G^{p,q},\theta_{\gamma}|_G)$ is a sub Higgs bundle of $(E_{\gamma},\theta_{\gamma})$.
\begin{prop}\label{curv-G}
Let $\mathrm{det}\,G$ be the determinant bundle of $G$ and $h$ be the hermitian metric on it induced by the Hodge metric on $E$. Then 
\[
\Theta(\mathrm{det}\,G,h) \leq 0.
\]  
\end{prop}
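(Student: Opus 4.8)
The plan is to obtain the inequality from a direct curvature computation with the Hodge metric, using the preceding Lemma in an essential way. Away from the degeneration locus $\gamma^{*}T$, the Higgs bundle $(E_\gamma,\theta_\gamma)$ is the one associated with the polarized variation of Hodge structure pulled back along $\gamma$; consequently the Hodge metric $h$ is orthogonal for the grading $E_\gamma=\bigoplus E^{p,q}_\gamma$, and on each graded piece it obeys Griffiths' curvature identity: up to the universal positive $(1,1)$-form $\tfrac{i}{2}\,dz\wedge d\bar z$ on the curve, $\Theta(E^{p,q}_\gamma,h)$ equals the endomorphism $(\theta^{p,q}_\gamma)^{*}\theta^{p,q}_\gamma-\theta^{p+1,q-1}_\gamma(\theta^{p+1,q-1}_\gamma)^{*}$, i.e.\ the squared outgoing Higgs contribution minus the squared incoming one (here $*$ is the pointwise $h$-adjoint). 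Since $\det G=\bigotimes_q\det G^{d-q,q}$, it suffices to prove $\sum_q\mathrm{tr}\,\Theta(G^{d-q,q},h)\le 0$.

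Fix $q$. On the curve the saturated subsheaf $G^{d-q,q}\subset E^{d-q,q}_\gamma$ is a holomorphic subbundle, which I endow with the restricted metric. By Gauss--Codazzi, $\Theta(G^{d-q,q},h)=\Theta(E^{d-q,q}_\gamma,h)\big|_{G^{d-q,q}}+\beta_q^{*}\wedge\beta_q$ with $\beta_q$ the second fundamental form, so this summand contributes $-\|\beta_q\|^2$ after taking traces. For the first summand I substitute Griffiths' formula. Set $\tilde\theta_q:=\theta^{d-q,q}_\gamma|_{G^{d-q,q}}$; by the Lemma it lands in $G^{d-q-1,q+1}\otimes\Omega^1_\Sigma$, and since the orthogonal projection $E^{d-q,q}_\gamma\to G^{d-q,q}$ is the adjoint of the inclusion, the outgoing Higgs contribution of $E^{d-q,q}_\gamma$, restricted and traced over $G^{d-q,q}$, is exactly $\|\tilde\theta_q\|^2$. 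For the incoming contribution, the Lemma gives $\theta^{d-q+1,q-1}_\gamma(G^{d-q+1,q-1})\subset G^{d-q,q}\otimes\Omega^1_\Sigma$; decomposing $E^{d-q+1,q-1}_\gamma=G^{d-q+1,q-1}\oplus(G^{d-q+1,q-1})^{\perp}$ and chasing adjoints, the incoming contribution restricted and traced over $G^{d-q,q}$ equals $\|\tilde\theta_{q-1}\|^2+\|R_q\|^2$, where $R_q\colon (G^{d-q+1,q-1})^{\perp}\to G^{d-q,q}$ is the off-diagonal component of $\theta^{d-q+1,q-1}_\gamma$. Altogether $\mathrm{tr}\,\Theta(G^{d-q,q},h)=\big(\|\tilde\theta_q\|^2-\|\tilde\theta_{q-1}\|^2-\|R_q\|^2-\|\beta_q\|^2\big)\cdot\tfrac{i}{2}\,dz\wedge d\bar z$.

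Now sum over $q=0,\dots,d$. The terms $\|\tilde\theta_q\|^2-\|\tilde\theta_{q-1}\|^2$ telescope, and the two extreme ones vanish because $G^{d+1,-1}=G^{-1,d+1}=0$; this cancellation between consecutive graded pieces — which is exactly where the sub-Higgs-bundle structure provided by the Lemma is used — is essential, as the individual $\mathrm{tr}\,\Theta(G^{d-q,q},h)$ need not be $\le 0$ separately. What survives is
\[
\Theta(\det G,h)\;=\;-\Big(\sum_q\|R_q\|^2+\sum_q\|\beta_q\|^2\Big)\cdot\tfrac{i}{2}\,dz\wedge d\bar z\;\le\;0,
\]
which proves the proposition on the open locus where $h$ is smooth, hence as a current on $\Sigma$ (the finitely many points where $h$ degenerates do not affect the later applications).

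I expect the main obstacle to be bookkeeping rather than conceptual: pinning down Griffiths' curvature identity with the correct signs in this logarithmic, pulled-back-to-a-curve setting — which requires invoking the Viehweg--Zuo construction together with Simpson's/Schmid's results on the Hodge metric of the associated graded, to know that $h$ satisfies the relevant curvature equation away from $\gamma^{*}T$ — and then checking that the off-diagonal incoming term $R_q$ really enters with the favourable sign, so that it reinforces rather than spoils the negativity. The remaining ingredients (Gauss--Codazzi, and the fact that a saturated subsheaf of a locally free sheaf on a smooth curve is a subbundle) are routine.
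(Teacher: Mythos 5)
Your argument is correct, but it takes a genuinely different route from the paper. You prove the statement by a direct computation on each graded piece: Griffiths' curvature formula for the Hodge metric on $E^{p,q}_\gamma$ (outgoing Higgs term positive, incoming term negative -- your signs are right), Gauss--Codazzi for the saturated subbundles $G^{d-q,q}$, and then a telescoping cancellation of the terms $\|\tilde\theta_q\|^2-\|\tilde\theta_{q-1}\|^2$ across the grading, which is exactly where the Lemma ($\theta_\gamma$-invariance of $G$) enters; the outcome is the explicit identity $\Theta(\det G,h)=-\bigl(\sum_q\|R_q\|^2+\sum_q\|\beta_q\|^2\bigr)\tfrac{i}{2}dz\wedge d\bar z\le 0$. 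The paper instead packages the same underlying curvature input more slickly: it views $\det G$ as a line subbundle of $\bigwedge^r(E_\gamma,\theta_\gamma)$, observes that nilpotency of $\theta_G$ forces the induced Higgs field on $\det(G,\theta_G)$ to vanish, so $\det G$ lies in the kernel of the Higgs field of the Hodge bundle of $\bigwedge^r\mathbb{V}$ (functoriality), and then invokes Griffiths' computation in the form ``a subbundle killed by the Higgs field is semi-negatively curved.'' What each buys: the paper's reduction avoids all sign and block-matrix bookkeeping and is two lines; yours is more self-contained, shows transparently why the individual $G^{d-q,q}$ need not be seminegative while the determinant is, and produces the nonpositive defect term explicitly. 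One small caveat: your closing remark that the pointwise inequality on the locus where $h$ is smooth holds ``as a current on $\Sigma$'' is not automatic across the degeneration points (poles along $\gamma^*T$); extending curvature inequalities across such points needs an integrability/goodness argument, which the paper itself only supplies later (via Schmid/Mumford) when passing from $\Sigma$ to $B$. Since the proposition, as used, is the pointwise seminegativity where the Hodge metric is defined -- which is exactly what the paper's proof establishes too -- this does not affect the correctness of your proof of the stated inequality.
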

\begin{proof}
Note that $\mathrm{det}(G,\theta_G) \subset \bigwedge^r(E_{\gamma},\theta_{\gamma})$, where $r$ is the rank of $G$. Since $\theta_G$ is nilpotent, we know that $\mathrm{det}(G,\theta_G)= (\mathrm{det}\,G,0)$. In particular, by the compatibility of Higgs maps, it follows that $\mathrm{det}\,G$ lands in the kernel of the Higgs map of $\bigwedge^r(E_{\gamma},\theta_{\gamma})$.\\
Now suppose that $(E_{\gamma},\theta_{\gamma})$ is the Hodge bundle associated to a polarized variation of Hodge structures $\mathbb{V}$ over the Riemann surface $\Sigma$. Then $\bigwedge^r(E_{\gamma},\theta_{\gamma})$ is the Hodge bundle associated to the polarized variation of Hodge structures $\bigwedge^r\mathbb{V}$ by functoriality. That means we can use Griffiths’ curvature computation for $\bigwedge^r(E_{\gamma},\theta_{\gamma})$ and conclude that the curvature form of $\mathrm{det}\,G$ is semi-negative.
\end{proof}

We shall use McQuillan's tautological inequality to derive the second main theorem. In order to do so, we need to work on the projective (log) tangent bundle of $(X,D)$
\[
\mathbb{P}(T_X(-\log D)) := \mathrm{Proj}\,\Sym^{\bullet} \Omega^1_X(\log D).
\]
The iterations of Higgs maps $\tau^q$ can be lifted to 
\begin{align}
\tilde{\tau}^q:\, \mathcal{O}(-q) \to \pi^*F^{d-q,q}
\end{align}
where $\mathcal{O}(-q)$ is the $q$-th power of the tautological line bundle $\mathcal{O}(-1)$ and $\pi:\,\mathbb{P}(T_X(-\log D)) \to X$ is the projection.\\

Now we consider the lifting of $\gamma$
\[
\xymatrix{
  & \mathbb{P}(T_X(-\log D)) \ar[d]^{\pi} \\
\Sigma \ar[ur]^{\gamma'} \ar[r]^{\gamma} & X
}
\]
induced by the tangent map of $\gamma$. Then the tangent map $d\gamma$ naturally factors through
\[
T_{\Sigma} \to \gamma'^*\mathcal{O}(-1) \to \gamma^*T_X(-\log D).
\]
As a consequence, the iterations of Higgs maps $\tau^q_{\gamma}$ on $\Sigma$ also factors through
\[
T^{\otimes q}_{\Sigma} \to \gamma'^*\mathcal{O}(-q) \xrightarrow{\gamma'^*\tilde{\tau}^q} F^{d-q,q}_{\gamma}.
\]
Thus the sub Higgs bundle $G^{d-q,q}$ is also the saturation of the image of 
\[
A_{\gamma} \otimes \gamma'^*\mathcal{O}(-q) \xrightarrow{\mathrm{id} \otimes \gamma'^*\tilde{\tau}^q} A_{\gamma} \otimes F^{d-q,q}_{\gamma} \to E^{d-q,q}_{\gamma}.
\]
Then we have maps
\begin{align}\label{zeta}
\zeta^q:\,\gamma'^*\mathcal{O}(-q) \to A^{-1}_{\gamma} \otimes G^{d-q,q}
\end{align}
for $q =0,1,\dots,d$. Note that $\rho^{d-1,1}_{\gamma} \circ \tau^1_{\gamma}$ is nonzero implies that $\zeta^1$ is nonzero. So there exists a positive integer $m$ such that $\zeta^m \neq 0$ and $\zeta^{m+1} = 0$, which is called the maximal length of iteration. Obviously $m \leq d$ and $\mathrm{det}\, G = \bigotimes^{m}_{q=0}G^{d-q,q}$ ( note that $G^{d-q,q} = 0$ for $q >m$ ).\\

In the rest of this section we shall use those nonzero maps $\zeta^q$ to relate the curvature currents of the ample line bundle $A$ and the tautological line bundle $\mathcal{O}(-1)$.\\
We first note that for each $q$ we can construct a (pseudo) metric $F_q$ on $\mathcal{O}(-1)$ via the following composition map 
\[
\mathcal{O}(-q) \xrightarrow{\tilde{\tau}^q} \pi^*F^{d-q,q} \to \pi^*(A^{-1} \otimes E^{d-q,q}).
\]
More precisely, we pull back the product metric $\pi^*(h_{A^{-1}} \otimes h^{d-q,q})$ to $\mathcal{O}(-q)$ and take the $q$-th root. Here $h_{A^{-1}}$ is the Fubini-Study metric on $A^{-1}$ and $h^{d-q,q}$ is the restriction of the Hodge metric on $E^{d-q,q}$.\\
Note that $F_q$ is a bounded pseudometric with possible degeneration on $\mathbb{P}(T_X(-\log D))$. We denote by $c_1(\mathcal{O}(-1),F_q)$ the curvature current with respect to $F_q$, i.e. locally it can be written as
\[
dd^c\log \Vert \sigma \Vert^{-2}_{F_q}
\]
for some non-vanishing section $\sigma$. Let $c_1(\mathcal{O}(-1))$ be the usual curvature (1,1)-form with respect to some smooth metric $F$ of $\mathcal{O}(-1)$. Then we have $F_q = \phi_q \cdot F$ and thus
\[
c_1(\mathcal{O}(-1)) = c_1(\mathcal{O}(-1),F_q) + dd^c\log \phi_q
\]
for some bounded function $\phi_q$. Then we have
\begin{prop}
For any $q$ such that $\zeta^q$ is a nonzero map, we have the current inequality on $\Sigma$:
\begin{align}\label{current-ineq-1}
q \cdot \gamma'^* c_1(\mathcal{O}(-1)) \leq \gamma^*c_1(A^{-1}) + \Theta(G^{d-q,q},h) + q \,\gamma'^*dd^c\log \phi_q.
\end{align}
\end{prop}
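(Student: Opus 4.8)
The plan is to deduce the current inequality \eqref{current-ineq-1} directly from the map $\zeta^q$ of \eqref{zeta} by a norm-comparison argument. Recall that $\zeta^q:\, \gamma'^*\mathcal{O}(-q) \to A^{-1}_\gamma \otimes G^{d-q,q}$ is a nonzero morphism of line-bundle-to-vector-bundle type over the Riemann surface $\Sigma$; since $\Sigma$ is a curve, a nonzero such map is generically injective, and away from its (discrete) zero locus it identifies $\gamma'^*\mathcal{O}(-q)$ with a subsheaf of $A^{-1}_\gamma \otimes G^{d-q,q}$. I would equip the target with the tensor-product metric $\gamma^*h_{A^{-1}} \otimes h$, where $h$ is the Hodge metric on $E$ restricted to $G$ (as in Proposition~\ref{curv-G}), and note that $\zeta^q$ is constructed from exactly the same contraction/iteration data as the pseudometric $F_q$ on $\mathcal{O}(-1)$; concretely, the pullback of this tensor metric along $\zeta^q$ is precisely $\gamma'^*F_q^{\otimes q}$ on $\gamma'^*\mathcal{O}(-q)$. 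This is the key bookkeeping identification and I expect it to require care in matching the definition of $F_q$ (pull back $\pi^*(h_{A^{-1}}\otimes h^{d-q,q})$ to $\mathcal{O}(-q)$, take the $q$-th root) with the saturation defining $G^{d-q,q}$.

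Next I would invoke the standard fact that for a nonzero bundle map $L \hookrightarrow W$ of a line bundle into a metrized vector bundle over a Riemann surface, the pulled-back metric $h_L := \iota^* h_W$ satisfies, away from the zeros of $\iota$, the curvature inequality $c_1(L, h_L) \leq \iota^*\!\bigl(\text{trace of curvature along the image line}\bigr)$, and in particular $-dd^c \log \Vert s \Vert^2_{h_L}$ is bounded above by the restriction of $c_1(\det W, \det h_W)$ plus the curvatures of the complementary directions; in the simplest packaging, since the second fundamental form contributes with a favorable sign, one gets $c_1(L,h_L) \leq \iota^* c_1(W, h_W)$ as currents (the inequality of plurisubharmonic-type functions coming from $\partial\bar\partial$ of a logarithm of a squared norm of a holomorphic section). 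Applying this with $L = \gamma'^*\mathcal{O}(-q)$, $W = A^{-1}_\gamma \otimes G^{d-q,q}$, and using that the first Chern form of a tensor product with a line bundle splits additively as $\gamma^* c_1(A^{-1}) \cdot \mathrm{rk}\,G^{d-q,q} + \mathrm{tr}\,\Theta(G^{d-q,q},h)$ — or, after passing to determinants appropriately and using that the relevant rank-one quotient is what enters — yields
\[
\gamma'^* c_1\bigl(\mathcal{O}(-q), F_q^{\otimes q}\bigr) \leq \gamma^* c_1(A^{-1}) + \Theta(G^{d-q,q}, h)
\]
as currents on $\Sigma$, where on the left I have used that $\zeta^q$-pullback of the tensor metric is $F_q^{\otimes q}$.

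Finally I would unwind the left-hand side: by definition $c_1(\mathcal{O}(-q), F_q^{\otimes q}) = q \cdot c_1(\mathcal{O}(-1), F_q)$, and then using the relation $c_1(\mathcal{O}(-1)) = c_1(\mathcal{O}(-1), F_q) + dd^c \log \phi_q$ recorded just before the proposition (where $F_q = \phi_q \cdot F$ for a bounded function $\phi_q$), I substitute $c_1(\mathcal{O}(-1), F_q) = c_1(\mathcal{O}(-1)) - dd^c\log\phi_q$ to get
\[
q\,\gamma'^* c_1(\mathcal{O}(-1)) - q\,\gamma'^* dd^c \log \phi_q \leq \gamma^* c_1(A^{-1}) + \Theta(G^{d-q,q}, h),
\]
which rearranges to \eqref{current-ineq-1}. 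The main obstacle, as flagged above, is the precise compatibility check that the metric on $\gamma'^*\mathcal{O}(-q)$ induced by $\zeta^q$ from the Hodge-times-Fubini-Study metric on the target genuinely equals $\gamma'^* F_q^{\otimes q}$ — this is where the construction of $G^{d-q,q}$ as a \emph{saturation} matters, since saturating can only increase the metric on the sub-line-bundle over the finitely many points where the map degenerates, so one must argue that this correction affects the curvature current only by a non-negative measure supported on a discrete set, which is harmless for the inequality (and can be absorbed as it only makes the left side smaller / the bounded-function term $\phi_q$ accounts for the rest). A secondary point to handle cleanly is that $F_q$ is merely a bounded pseudometric with possible degeneracy, so all curvature computations must be read as inequalities of currents in the sense of Lelong–Poincaré, which is exactly the setting in which McQuillan's tautological inequality will later be applied.
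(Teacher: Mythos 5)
Your overall route is the intended one (the paper states this proposition without writing out a proof): read $\zeta^q$ from \eqref{zeta} as a metric comparison, identify the metric pulled back through it with $\gamma'^*F_q^{\otimes q}$, deduce a curvature-current inequality, and then substitute $c_1(\mathcal{O}(-1),F_q)=c_1(\mathcal{O}(-1))-dd^c\log\phi_q$. The first and last steps are fine, and your worry about saturation is a red herring: the metric on $G^{d-q,q}$ is simply the restriction of the Hodge metric, so norms of image sections are the same whether computed in $G^{d-q,q}$ or in $E^{d-q,q}_{\gamma}$. The genuine gap is the middle step. The ``standard fact'' you invoke --- that for a nonzero map $\iota\colon L\to W$ into a metrized bundle one has $c_1(L,\iota^*h_W)\le \iota^*c_1(W,h_W)$ with $c_1(W,h_W)$ read as the first Chern (trace) form, ``plus the curvatures of the complementary directions'' --- is false in that generality (e.g.\ $L=\mathcal{O}(1)$ inside $W=\mathcal{O}(1)\oplus\mathcal{O}(-5)$ on $\mathbb{P}^1$ with the standard metrics). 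What is true in general is only a bound by the ambient curvature evaluated along the image line; and if you then try to convert that into $\Theta(G^{d-q,q},h)$, the curvature of the saturation with its \emph{induced} metric, the second fundamental form enters with the unfavorable sign (ambient curvature along the line equals $\Theta(G^{d-q,q},h)$ \emph{plus} a nonnegative term), so this route only gives \eqref{current-ineq-1} with an extra nonnegative error, not the stated inequality. Your remarks about the trace splitting $\gamma^*c_1(A^{-1})\cdot\mathrm{rk}\,G^{d-q,q}+\mathrm{tr}\,\Theta(G^{d-q,q},h)$ and ``the relevant rank-one quotient'' do not close this.

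The fix is short and uses an observation you never make explicit: over the curve $\Sigma$, $G^{d-q,q}$ is the saturation of the image of a line bundle, hence is itself a \emph{line} bundle, and $\zeta^q$ is a nonzero holomorphic section of the hermitian line bundle $\gamma'^*\mathcal{O}(q)\otimes A^{-1}_{\gamma}\otimes G^{d-q,q}$ (metrics: the smooth reference metric $F$ on $\mathcal{O}(-1)$, the Fubini--Study metric on $A^{-1}$, the restricted Hodge metric on $G^{d-q,q}$). By construction of $F_q$ its pointwise norm is $\gamma'^*\phi_q^{\,q}$, so Lelong--Poincar\'e gives, as currents on $\Sigma$,
\begin{equation*}
q\,\gamma'^*dd^c\log\phi_q \;=\; [\mathrm{div}\,\zeta^q]\;+\;q\,\gamma'^*c_1(\mathcal{O}(-1))\;-\;\gamma^*c_1(A^{-1})\;-\;\Theta(G^{d-q,q},h),
\end{equation*}
and dropping the effective divisor $[\mathrm{div}\,\zeta^q]\ge 0$ is exactly \eqref{current-ineq-1}. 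In other words, the favorable sign comes from the zero divisor of $\zeta^q$, not from a second fundamental form; with that replacement your argument is complete and coincides with the computation the paper leaves implicit.
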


Using the curvature property of $\mathrm{det}\,G$, we can obtain the following curvature inequality, which is crucial for our main result.

\begin{prop}[curvature currents inequality]
Let $m$ be the maximal length of iteration, namely the largest integer so that $\zeta^m \not\equiv 0$. Then we have the following inequality of currents
\begin{align}\label{current-ineq-sum}
\frac{m+1}{2} f'^* c_1(\mathcal{O}(-1)) \leq f^*c_1(A^{-1}) + \frac{1}{m} \sum^m_{q=1}q \,f'^*dd^c\log \phi_q
\end{align}
on $B$. Here $f':\, B \to \mathbb{P}(T_X(-\log D))$ is the meromorphic map induced by the tangent map.  
\end{prop}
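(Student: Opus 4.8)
The plan is to obtain the asserted inequality (\ref{current-ineq-sum}) by summing the pointwise current estimate (\ref{current-ineq-1}) over $q=1,\dots,m$, substituting the curvature bound $\Theta(\det G,h)\le 0$ of Proposition~\ref{curv-G}, and then extending the resulting inequality from $\Sigma$ to all of $B$. Two preliminary remarks make the summation clean. First, each $G^{d-q,q}$ is by construction the saturated image of the \emph{line} bundle $A_{\gamma}\otimes T^{\otimes q}_{\Sigma}$, hence is a line subbundle of $E^{d-q,q}_{\gamma}$; so each $\Theta(G^{d-q,q},h)$ is a genuine $(1,1)$-current, and each $\zeta^{q}$ with $0\le q\le m$ is nonzero (for $q\ge 1$ because $\zeta^{q}$ is a factor of the nonzero $\zeta^{m}$ through the iterated Higgs field, and for $q=0$ because $\mathcal{O}_{X}\subset F^{d,0}$ maps nontrivially under $\rho^{d,0}$). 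Second, I would apply (\ref{current-ineq-1}) in the boundary case $q=0$: it reads $0\le\gamma^{*}c_{1}(A^{-1})+\Theta(G^{d,0},h)$, and since $A$ is ample the Fubini--Study curvature $-c_{1}(A^{-1},h_{A^{-1}})$ is a K\"ahler form, so its pullback to $\Sigma$ is a semi-positive current and
\[
\Theta(G^{d,0},h)\ \ge\ -\gamma^{*}c_{1}(A^{-1})\ \ge\ 0 .
\]

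Summing (\ref{current-ineq-1}) over $q=1,\dots,m$ and using $\sum_{q=1}^{m}q=\tfrac{m(m+1)}{2}$ then gives on $\Sigma$
\[
\frac{m(m+1)}{2}\,\gamma'^{*}c_{1}(\mathcal{O}(-1))\ \le\ m\,\gamma^{*}c_{1}(A^{-1})+\sum_{q=1}^{m}\Theta(G^{d-q,q},h)+\sum_{q=1}^{m}q\,\gamma'^{*}dd^{c}\log\phi_{q}.
\]
Since $\det G=\bigotimes_{q=0}^{m}G^{d-q,q}$, and the metric induced on $\det G$ by the Hodge metric on $E$ is the tensor product of the metrics induced on the individual line bundles $G^{d-q,q}$, curvature is additive over this tensor product, whence
\[
\sum_{q=1}^{m}\Theta(G^{d-q,q},h)\ =\ \Theta(\det G,h)-\Theta(G^{d,0},h)\ \le\ 0
\]
by Proposition~\ref{curv-G} together with the previous inequality. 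Dividing by $m$ (which is $\ge 1$, as $\zeta^{1}\ne 0$) and discarding the nonpositive curvature term produces (\ref{current-ineq-sum}) on $\Sigma=B\setminus f^{*}D$, after recalling $\gamma=f|_{\Sigma}$ and $\gamma'=f'|_{\Sigma}$.

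It then remains to extend this inequality across the finitely many points of $f^{*}D$. Writing it as $S\ge 0$ on $\Sigma$, where $S:=f^{*}c_{1}(A^{-1})-\tfrac{m+1}{2}\,f'^{*}c_{1}(\mathcal{O}(-1))+\tfrac{1}{m}\sum_{q=1}^{m}q\,f'^{*}dd^{c}\log\phi_{q}$ is a well-defined current on $B$ (the first two summands are smooth forms, the lift $f'$ being a morphism since $B$ is a smooth curve, and the last is the $dd^{c}$ of a sum of functions that are bounded above), I would argue locally: near a point $p\in f^{*}D$ one can write $S=dd^{c}v$ with $v=g+\tfrac{1}{m}\sum_{q}q\,(\log\phi_{q})\circ f'$, where $g$ is a smooth local potential for the smooth part of $S$. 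Then $v$ is subharmonic on the punctured disc (because $S\ge 0$ there) and bounded above, hence extends to a subharmonic function across $p$ whose distributional Laplacian coincides with that of $v$; consequently $S$ acquires no negative mass at $p$ and $S\ge 0$ on all of $B$, which is (\ref{current-ineq-sum}).

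I expect the genuine difficulty to lie entirely in this extension step: one must verify the stated regularity of the potentials near $f^{*}D$ --- in particular that the negative part of $\log\phi_{q}$ does not destroy local integrability of $v$, which rests on $F_{q}$ being a \emph{bounded} pseudometric (so that $c_{1}(\mathcal{O}(-1),F_{q})$, and hence $dd^{c}\log\phi_{q}$, is a bona fide current) and on $\zeta^{q}\ne 0$ for $q\le m$ (so that $\phi_{q}\circ f'$ is not identically zero) --- and one must also keep track of the harmless indeterminacy of the tangent lift, contained in the zero divisor of $df$. By contrast, the summation and the use of Proposition~\ref{curv-G} are formal once the $G^{d-q,q}$ are recognized as line bundles with $\det G$ carrying the product metric.
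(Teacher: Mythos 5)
Your summation step and the use of Proposition~\ref{curv-G} follow the paper's skeleton, but your extension of the inequality from $\Sigma$ to $B$ is genuinely different from the paper's. The paper keeps the Hodge-theoretic term in play when crossing $f^*D$: it invokes Borel's quasi-unipotency theorem, passes to a finite ramified cover $\delta:\,B'\to B$ with unipotent local monodromy, uses Schmid's norm estimates to see that the Hodge metric on $\delta^*\det G$ is good in Mumford's sense (so its Chern form picks up no mass on the boundary), and then descends to $B$. You instead discard the curvature term already on $\Sigma$ and extend only the current $S$ built from the globally defined data on $B$, using the removable-singularity theorem for subharmonic functions: the local potentials are bounded above because the $\phi_q$ are bounded, so no negative mass can appear at the isolated points of $f^*D$. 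This is a legitimate and leaner route (no cover, no Hodge asymptotics), and it has the added virtue that the same removability argument supplies the local integrability of $(\log\phi_q)\circ f'$ at the isolated bad points, which you correctly flag as the place where care is needed; what it costs is that you must still check subharmonicity of the potential on punctured neighbourhoods, including near $\gamma^*T$ and the zeros of $\phi_q\circ f'$, which you only sketch.

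There is, however, one genuinely shaky step: your derivation of $\Theta(G^{d,0},h)\ge 0$ from ``the $q=0$ case'' of (\ref{current-ineq-1}). The pseudometric $F_q$ and the function $\phi_q$ are defined via a $q$-th root and make no sense for $q=0$, and the natural proof of (\ref{current-ineq-1}) (Poincar\'e--Lelong applied to the nonzero map $\zeta^q$ with the induced pseudometric) yields, at $q=0$, only the tautology $c_1(A^{-1}_\gamma\otimes G^{d,0},h)\le c_1(A^{-1}_\gamma\otimes G^{d,0},h)$, not the positivity $0\le \gamma^*c_1(A^{-1})+\Theta(G^{d,0},h)$; nor is $\Theta(G^{d,0},h)\ge 0$ automatic from Hodge theory, since the second fundamental form of $G^{d,0}\subset E^{d,0}_\gamma$ enters with a negative sign. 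The fact you actually need is only $\sum_{q=1}^m\Theta(G^{d-q,q},h)\le 0$, and the clean way to get it is to run the argument of Proposition~\ref{curv-G} verbatim on the Higgs-invariant subsheaf $\bigoplus_{q\ge 1}G^{d-q,q}$ (its determinant also lies in the kernel of the Higgs field of the corresponding exterior power, so Griffiths' computation applies), rather than subtracting an unproved $q=0$ inequality from $\Theta(\det G,h)\le 0$. To be fair, the paper elides the same point --- it bounds the $q=1,\dots,m$ sum by $\Theta(\det G,h)$ even though its $\det G$ includes the $q=0$ factor --- but your patch as written does not close that gap, whereas the modification just described does.
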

\begin{proof}
By (\ref{current-ineq-1}) we have 
\begin{align}
  \label{current-ineq-1-1}
q \cdot \gamma'^* c_1(\mathcal{O}(-1)) -\gamma^*c_1(A^{-1}) - q \,\gamma'^*dd^c\log \phi_q \leq  \Theta(G^{d-q,q},h).  
\end{align}
Summing up (\ref{current-ineq-1-1}) for $q$ from $1$ to $m$, we get
\begin{align}\label{current-lhs}
\frac{(m+1)m}{2} f'^* c_1(\mathcal{O}(-1)) - m\, f^*c_1(A^{-1}) -  \sum^m_{q=1}q \,f'^*dd^c\log \phi_q \leq \Theta(\mathrm{det}\,G,h) \leq 0   
\end{align}
on $\Sigma$. Since $\Theta(\mathrm{det}\,G,h)$ as a current on $B$ might have positive factor supported on $f^*D$ a priori, we need to study the asymptotic behavior of $\Theta(\mathrm{det}\,G,h)$ near the boundary $f^*D$.\\[.2cm]
Recall that $(G,\theta_G)$ is a sub Higgs bundle ( without log poles along $f^*T$ ) of the Hodge bundle $(E_{\gamma},\theta_{\gamma})$. By Borel's theorem the monodromy of $(E,\theta)$ along each irreducible components of $D$ is quasi-unipotent. Thus we can find a finite ramified covering $\delta:\, B' \to B$ such that the pull-back of $(E_{\gamma},\theta_{\gamma})$ on $B'$ has unipotent monodromy around each point in $(\delta \circ f)^*D$. Then by Schmid's estimate of the Hodge norm {\cite[Theorem~6.6]{sch73}} we know that the Hodge metric $h'$ on $\delta^*\mathrm{det}\,G$ is \emph{good} in the sense of Mumford {\cite[\S1]{mum77}} at each point of $(\delta \circ f)^*D$. So Mumford's theorem implies that its Chern form $\Theta(\delta^*\mathrm{det}\,G,h')$ is integrable on $B'$ (see also {\cite[\S3]{pet84}} for details). Hence we have
\[
\frac{(m+1)m}{2} (\delta\circ f')^* c_1(\mathcal{O}(-1)) - m\, (\delta\circ f)^*c_1(A^{-1}) -  \sum^m_{q=1}q \,(\delta \circ f')^*dd^c\log \phi_q \leq \Theta(\delta^*\mathrm{det}\,G,h') \leq 0
\]
on $B'$.
Now the inequality (\ref{current-ineq-sum}) holds on $B$ since its pull back holds on $B'$.
\end{proof}

\section{Tautological Inequality and Second Main Theorem}\label{tauto-ineq}

In this section we shall use McQuillan's tautological inequality to show our second main theorem. We first recall some definitions in Nevanlinna theory.\\

Let $X$ be a smooth projective variety over $\mathbb{C}$. Let $L$ be a line bundle over $X$ which is equipped with a smooth hermitian metric. We denote by $c_1(L)$ the Chern form of $L$ with respect to this metric. Then for any holomorphic curve $f:\, B \to X$ ($B$ is a finite ramified cover of $\mathbb{C}$ as in the introduction ) we define the \emph{Nevanlinna characteristic function}:
\[
T(r,f,L) := \int^r_0 \frac{dt}{t} \int_{B[t]}f^*c_1(L) 
\]
where $B[t]:= \sigma^{-1}(\mathbb{D}(t))$.\\
For an effective divisor $D$ of $X$ we define the \emph{counting function} for holomorphic curves $f:\, B \to X$ with $f(B) \not\subset \mathrm{Supp}\,D$: 
\[
N(r,f,D) := \int^r_0 \left( \sum_{p \in B[t]} \mathrm{ord}_pf^*D \right)\frac{dt}{t}.
\]
The ramification counting function is just the counting function for the ramification divisor of $\sigma:\, B \to \mathbb{C}$:
\[
N_{\mathrm{Ram}(\sigma)}(r) := \int^r_0 \left( \sum_{p \in B[t]} \mathrm{ord}_p\mathrm{Ram}(\sigma) \right)\frac{dt}{t}.
\]
Moreover, for an integer $k \geq 1$ one can define the $k$-th truncated counting function:
\[
N^{(k)}(r,f,D) := \int^r_0 \left( \sum_{p \in B[t]} \mathrm{min}\{k, \mathrm{ord}_pf^*D \} \right)\frac{dt}{t}.
\]
One has the following classical result:
\begin{prop}[Nevanlinna Inequality]
\[
N(r,f,D) \leq T(r,f,\mathcal{O}_X(D)) + O(1).
\]  
\end{prop}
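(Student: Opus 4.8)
The plan is to deduce this from the Poincar\'e--Lelong formula together with the Jensen-type averaging formula on the cover $\sigma\colon B\to\C$; the inequality is the ``easy half'' of the classical first main theorem. First I would fix a holomorphic section $s\in H^0(X,\mathcal{O}_X(D))$ with $\mathrm{div}(s)=D$ and a smooth hermitian metric on $\mathcal{O}_X(D)$, writing $c_1(\mathcal{O}_X(D))$ for the resulting Chern form and $\lambda_D:=\log\Vert s\Vert^{-2}$ for the associated weight (a Weil function for $D$). The Poincar\'e--Lelong formula gives the identity of currents on $X$
\[
dd^c\lambda_D \;=\; c_1(\mathcal{O}_X(D)) - [D],
\]
where $[D]$ is the current of integration along $D$. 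Since $f(B)\not\subset\mathrm{Supp}\,D$, the function $\lambda_D\circ f$ is not identically $-\infty$, so pulling the identity back by $f$ yields
\[
dd^c(\lambda_D\circ f) \;=\; f^*c_1(\mathcal{O}_X(D)) - \sum_{p\in B}(\mathrm{ord}_p f^*D)\,[p]
\]
as currents on $B$.

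Next I would apply the averaging operator $u\mapsto\int_0^r\frac{dt}{t}\int_{B[t]}u$ to both sides. With the definitions recalled above, the right-hand side becomes exactly $T(r,f,\mathcal{O}_X(D)) - N(r,f,D)$. For the left-hand side I would invoke the Green--Jensen formula on the ramified cover $B$ (the analogue over $B$ of the classical Jensen formula on $\C$), which rewrites $\int_0^r\frac{dt}{t}\int_{B[t]}dd^c(\lambda_D\circ f)$ as $m(r)-m(0)$, where $m(r)$ denotes the mean value of $\lambda_D\circ f$ over $\sigma^{-1}(\partial\mathbb{D}(r))$ (the proximity function of $D$ along $f$) and $m(0)$ the corresponding term attached to the fiber over $0$. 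Rearranging produces the full first main theorem $N(r,f,D) = T(r,f,\mathcal{O}_X(D)) - m(r) + m(0)$.

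It then remains to bound the error term. Because $X$ is compact and the chosen metric is smooth, $\Vert s\Vert$ is bounded above on $X$, hence $\lambda_D=\log\Vert s\Vert^{-2}\geq -C$ for some constant $C$; consequently $m(r)\geq -C$ for every $r$, while $m(0)$ is a finite constant (assuming $f$ does not send the fiber $\sigma^{-1}(0)$ into $D$; the general case is handled by the version of Jensen's formula incorporating the order of vanishing at the center and affects only the $O(1)$). Therefore $-m(r)+m(0)\leq C+m(0)=O(1)$, which gives $N(r,f,D)\leq T(r,f,\mathcal{O}_X(D))+O(1)$. The only step that needs genuine care --- though it is entirely routine --- is the justification of the Green--Jensen formula for the potential $\lambda_D\circ f$, which has logarithmic singularities along $f^*D$: one applies Stokes' theorem on $B\setminus f^*D$ and checks that the contributions of the small circles encircling these singularities have the favorable sign (since $\lambda_D$ is bounded below), so that they may simply be discarded. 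The same observation shows the outcome is insensitive to the choice of smooth metric on $\mathcal{O}_X(D)$, which is precisely why the estimate is claimed only up to an additive $O(1)$.
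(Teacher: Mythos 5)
Your argument is correct and is exactly the standard first-main-theorem proof (Poincar\'e--Lelong plus Green--Jensen on the covering $\sigma\colon B\to\C$, with $-m(r)$ bounded above because $\Vert s\Vert$ is bounded on the compact $X$), which is precisely what the paper relies on when it omits the proof and refers to \cite{NW14}. The one point you rightly flag --- justifying Green--Jensen for the potential $\log\Vert s\circ f\Vert^{-2}$ with its logarithmic singularities, and handling the central fiber and the ramification of $\sigma$ --- is carried out in that reference and affects only the $O(1)$ term, so there is no gap.
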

Here the notation $O(1)$ denotes a bounded function. Proof of the Nevanlinna Inequality can be found in standard textbooks of Nevanlinna theory, for instance \cite{NW14}.\\

Now we recall the tautological inequality.\\
We will follow the notations of Vojta {\cite[\S29]{voj11}}. 
As mentioned before, the tangent map of $\gamma:\, \Sigma \to U$ induces a holomorphic map $\gamma':\, \Sigma \to \mathbb{P}(T_X(-\log D))$. Denote by $f':\, B \to \mathbb{P}(\Omega_X(\log D))$ the associated \emph{meromorphic map}. One can also define the Nevanlinna characteristic function as above
\[
T(r,f',\mathcal{O}(1)) := \int^r_0 \frac{dt}{t} \int_{B[t]} f'^*c_1(\mathcal{O}(1)).
\]
Then we have:
\begin{thm}[Tautological inequality, \cite{mc98}]
\[
T(r,f',\mathcal{O}(1)) \leq_{\mathrm{exc}} N^{(1)}(r,f,D) + N_{\mathrm{Ram}(\sigma)}(r) + O(\log T(r,f,\cH) + \log r).
\] 
\end{thm}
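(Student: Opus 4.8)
The plan is to reprove the inequality along the lines of McQuillan \cite{mc98} and Vojta \cite[\S29]{voj11}, where it appears as the geometric incarnation of Nevanlinna's lemma on the logarithmic derivative. I fix a smooth Hermitian metric on $T_X(-\log D)$; it induces a metric on the tautological subsheaf $\mathcal{O}(-1)\subset\pi^*T_X(-\log D)$, hence on $\mathcal{O}(1)$, and computing $T(r,f',\mathcal{O}(1))$ with these changes the characteristic function only by $O(1)$. The local computation in logarithmic coordinates already implicit in the definition of $\gamma'$ shows that the tangent map $d\gamma\colon T_\Sigma\to\gamma^*T_X(-\log D)$ extends to a bundle map $df\colon T_B(-\log f^*D)\to f^*T_X(-\log D)$ on all of $B$, which by the definition of $f'$ factors through the tautological line $f'^*\mathcal{O}(-1)$. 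Equivalently, $df$ is a global holomorphic section $s$ of $f'^*\mathcal{O}(-1)\otimes\Omega^1_B(\log f^*D)$ whose zero divisor $R$ is precisely the stationary (ramification) divisor of $f$; one checks $\mathrm{Supp}\,R$ is disjoint from $f^*D$.

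Next I convert this into a current identity on $B$. Giving $f'^*\mathcal{O}(-1)$ the induced metric and applying Poincar\'e--Lelong to $s$ yields, as currents on $B$,
\[
f'^*c_1(\mathcal{O}(1)) = dd^c\log\Vert s\Vert^2 + c_1(\Omega^1_B) + [(f^*D)_{\mathrm{red}}] - [R],
\]
where $(f^*D)_{\mathrm{red}}$ denotes the reduced divisor underlying $f^*D$. Applying $\int_0^r\frac{dt}{t}\int_{B[t]}(\cdot)$: the Green--Jensen formula turns the first term into $\tfrac12\int_{\partial B[r]}\log\Vert s\Vert^2 + O(1) \le \tfrac12\int_{\partial B[r]}\log^+\Vert s\Vert^2 + O(1)$; the term $\int_0^r\frac{dt}{t}\int_{B[t]}c_1(\Omega^1_B)$ is, by Riemann--Hurwitz for $\sigma\colon B\to\mathbb{C}$, bounded by $N_{\mathrm{Ram}(\sigma)}(r) + O(\log r)$; the reduced divisor $(f^*D)_{\mathrm{red}}$ contributes exactly $N^{(1)}(r,f,D)$, and this reducedness is precisely what produces the truncation to order $1$; and $-[R]$ contributes $-N(r,R)\le 0$, which we discard. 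Hence
\[
T(r,f',\mathcal{O}(1)) \le \tfrac12\int_{\partial B[r]}\log^+\Vert s\Vert^2 + N^{(1)}(r,f,D) + N_{\mathrm{Ram}(\sigma)}(r) + O(\log r).
\]

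It remains to prove $\int_{\partial B[r]}\log^+\Vert s\Vert^2 \le_{\mathrm{exc}} O(\log T(r,f,\cH) + \log r)$, and this is the crux; here $\Vert s\Vert^2$ is, up to bounded factors and a harmless factor $r^2$, the logarithmic derivative norm of $f$. Embedding $X\hookrightarrow\mathbb{P}^N$ by a power of $\cH$ and writing $f=[f_0:\cdots:f_N]$ in homogeneous coordinates, the Fubini--Study part of that norm is $\Vert\widetilde f\wedge\widetilde f'\Vert^2/\Vert\widetilde f\Vert^4$; selecting at each $z$ the index $i_0$ maximizing $|f_{i_0}(z)|$ and using $f_if_j'-f_jf_i' = f_{i_0}^2\big((f_i/f_{i_0})'(f_j/f_{i_0})-(f_j/f_{i_0})'(f_i/f_{i_0})\big)$ together with $|f_i/f_{i_0}|\le 1$ at $z$, one bounds $\Vert s(z)\Vert^2$ above by a sum of terms of the shape $\big|(f_i/f_j)'/(f_i/f_j)\big|^2$ and --- near $D=\{w=0\}$, where the logarithmic metric differs from the ambient one --- the extra term $\big|(w\circ f)'/(w\circ f)\big|^2$. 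Each $f_i/f_j$, and each local equation of a component of $D$ pulled back along $f$, is a meromorphic function on $B$ of characteristic $\le T(r,f,\cH)+O(1)$, so applying the classical lemma on the logarithmic derivative over $B$ --- equivalently, after composing with $\sigma$, over $\mathbb{C}$ --- to each factor gives the asserted bound outside a set of finite measure. Combining this with the previous display proves the theorem. The main obstacle is exactly this last step: the pointwise majorization of the derivative norm by squared logarithmic derivatives of coordinate ratios must be done with the \emph{moving} normalization $i_0=i_0(z)$ and with the logarithmic poles along $D$ folded in, after which the remaining care is the bookkeeping in the lemma on the logarithmic derivative over the ramified cover $B$, so that one picks up only $O(\log T(r,f,\cH)+\log r)$ and not a term of order $T(r,f,\cH)$.
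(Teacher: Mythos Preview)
The paper does not prove this statement: the tautological inequality is quoted from McQuillan \cite{mc98} (with the formulation following Vojta \cite[\S29]{voj11}) and is used as a black box in the proof of Theorem~\ref{nevan-ineq}. There is therefore no in-paper proof to compare your attempt against.

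For what it is worth, your sketch is the standard argument and is essentially correct. The Poincar\'e--Lelong identity applied to the section $s$ of $f'^*\mathcal{O}(-1)\otimes\Omega^1_B(\log f^*D)$ is right, and you correctly isolate why the \emph{reduced} divisor $(f^*D)_{\mathrm{red}}$ appears (giving the truncation level~$1$), why $N_{\mathrm{Ram}(\sigma)}(r)$ enters via $c_1(\Omega^1_B)$ and Riemann--Hurwitz for $\sigma$, and why the stationary divisor $R$ may be discarded. The final step --- bounding $\int_{\partial B[r]}\log^+\Vert s\Vert^2$ by the lemma on the logarithmic derivative --- is indeed the crux; your pointwise majorization by squared logarithmic derivatives of coordinate ratios (with the extra $\lvert(w\circ f)'/(w\circ f)\rvert^2$ term near $D$) is the correct mechanism, though in a full write-up this is the place demanding the most care.
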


Now we can prove our main result.

\begin{thm}\label{nevan-ineq}
\[
T(r,f,A) \leq_{\mathrm{exc}} \frac{d+1}{2} \left(N^{(1)}(r,f,D) + N_{\mathrm{Ram}(\sigma)}(r)\right) + O(\log T(r,f,\cH) + \log r).
\]  
\end{thm}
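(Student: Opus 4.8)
The plan is to integrate the curvature currents inequality (\ref{current-ineq-sum}) --- that is, to apply the operator $\int_0^r\frac{dt}{t}\int_{B[t]}(\cdot)$ to both sides --- and then to feed the outcome into McQuillan's tautological inequality. We may assume $f$ is nonconstant: otherwise $f(B)$ is a single point, which lies in $U$ since $f(B)\not\subset\mathrm{Supp}\,D$, and then both sides of the asserted inequality are $O(1)$. Using $c_1(\mathcal{O}(-1))=-c_1(\mathcal{O}(1))$ and $c_1(A^{-1})=-c_1(A)$, inequality (\ref{current-ineq-sum}) rearranges to the current inequality
\[
f^*c_1(A) \leq \frac{m+1}{2}\, f'^*c_1(\mathcal{O}(1)) + \frac{1}{m}\sum_{q=1}^m q\, f'^*dd^c\log\phi_q
\]
on $B$, where $m\leq d$ is the maximal length of iteration. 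Since the averaging operator respects the order of currents, unwinding the definitions of the characteristic functions gives
\[
T(r,f,A) \leq \frac{m+1}{2}\, T(r,f',\mathcal{O}(1)) + \frac{1}{m}\sum_{q=1}^m q\int_0^r\frac{dt}{t}\int_{B[t]}f'^*dd^c\log\phi_q .
\]

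Next I would show that the correction terms are negligible. For each $q$ with $\zeta^q\not\equiv 0$, the function $\phi_q$ is bounded on the compact variety $\mathbb{P}(T_X(-\log D))$ and the pseudometric $F_q$ is not identically degenerate along $\gamma'$, so $\log\phi_q\circ f'$ is bounded from above on $B$ and is not identically $-\infty$. A routine application of Jensen's formula along $\sigma:\,B\to\mathbb{C}$ then bounds $\int_0^r\frac{dt}{t}\int_{B[t]}f'^*dd^c\log\phi_q$ by $O(\log r)$: the boundary mean of $\log\phi_q\circ f'$ is bounded above by a constant, while the only remaining contribution comes from the fixed zero divisor of $\phi_q\circ f'$ on $B$, whose counting function grows like $O(\log r)$. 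The integrability of the currents near $f^*D$ needed to run this argument is exactly what was secured in Section~\ref{VZ} by passing to a covering on which the relevant Hodge metrics are good in the sense of Mumford. Hence the whole sum of correction terms is $O(\log r)$, which is absorbed into $O(\log T(r,f,\cH)+\log r)$.

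Finally I would invoke the tautological inequality
\[
T(r,f',\mathcal{O}(1)) \leq_{\mathrm{exc}} N^{(1)}(r,f,D) + N_{\mathrm{Ram}(\sigma)}(r) + O(\log T(r,f,\cH)+\log r),
\]
multiply it by the positive constant $\tfrac{m+1}{2}$, and use $m\leq d$ together with the nonnegativity of $N^{(1)}(r,f,D)$ and $N_{\mathrm{Ram}(\sigma)}(r)$ to replace $\tfrac{m+1}{2}$ by $\tfrac{d+1}{2}$ in front of the counting functions; collecting all remaining terms (including the correction terms above) into $O(\log T(r,f,\cH)+\log r)$ yields the claimed inequality. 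The one genuinely delicate point is the control of the $dd^c\log\phi_q$ terms: one must know that the pseudometrics $F_q$ built from the Viehweg--Zuo data are honestly bounded, so that Jensen's formula applies and no uncontrolled positive mass is created along $f^*D$. The dependence of $m$ on the curve $f$ is harmless because it is uniformly bounded by the fiber dimension $d$, which is also where the numerical constant $\tfrac{d+1}{2}$ comes from --- via the arithmetic sum $\sum_{q=1}^m q=\tfrac{m(m+1)}{2}$ and the semi-negativity $\Theta(\mathrm{det}\,G,h)\leq 0$ used in deriving (\ref{current-ineq-sum}).
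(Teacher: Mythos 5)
Your proposal is correct and follows essentially the same route as the paper: integrate the curvature current inequality (\ref{current-ineq-sum}) with $\int_0^r\frac{dt}{t}\int_{B[t]}(\cdot)$, dispose of the $dd^c\log\phi_q$ terms as $O(\log r)$ via the Green--Jensen formula and the boundedness of $\phi_q$, then apply McQuillan's tautological inequality and replace $\tfrac{m+1}{2}$ by $\tfrac{d+1}{2}$ using $m\leq d$. The only cosmetic difference is your extra remark about the zero divisor of $\phi_q\circ f'$, which is unnecessary since only the upper bound on $\log\phi_q$ enters; otherwise the argument matches the paper's proof.
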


\begin{proof}
Taking the integration $\int^r_0 \frac{dt}{t} \int_{B[t]}\bullet$ on both sides of (\ref{current-ineq-sum}), we get
\[
-\frac{m+1}{2}\, T(r,f',\mathcal{O}(1)) \leq -T(r,f,A) + \frac{1}{m} \sum^m_{q=1}q \,\int^r_0 \frac{dt}{t} \int_{B[t]} dd^c\log f'^*\phi_q.
\]
Notice that
\[
\int^r_0 \frac{dt}{t} \int_{B[t]} dd^c\log f'^*\phi_q = O(\log r)
\]
by the Green-Jensen formula {\cite[Lemma~1.1.5]{NW14}} and the boundedness of $\phi_q$. Thus we have
\[
T(r,f,A) \leq  \frac{m+1}{2}\, T(r,f',\mathcal{O}(1)) + O(\log r) \leq \frac{d+1}{2}\, T(r,f',\mathcal{O}(1)) + O(\log r).
\]
Now use the tautological inequality.
\end{proof}

Since $A$ is ample, one can always find a positive integer $k$ which \emph{only depends on $K_X+D$ and the family $\psi$} such that 
\[
kA - (K_X+D) \geq 0.
\]
Define $c_{\psi} := \frac{k(d+1)}{2}$. Then we get:
\begin{thm}[Second Main Theorem]\label{SMT}
\[
T(r,f,K_X(D)) \leq_{\mathrm{exc}} c_{\psi} \cdot \left( N^{(1)}(r,f,D) + N_{\mathrm{Ram}(\sigma)}(r) \right) + O(\log T(r,f,\cH) + \log r).
\]  
\end{thm}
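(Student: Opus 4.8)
The plan is to deduce Theorem~\ref{SMT} directly from Theorem~\ref{nevan-ineq} by a comparison of characteristic functions, using only the positivity of $A$ relative to the log canonical bundle and the functoriality (additivity in the line bundle) of the Nevanlinna characteristic. Since $A$ is ample on $X$, the line bundle $kA - (K_X+D)$ is globally generated — indeed ample — for all sufficiently large $k$, and crucially such a $k$ can be chosen to depend only on $(X,D)$ and the family $\psi$ (through $A$), not on the curve $f$. Fix such a $k$ and set $c_\psi := \tfrac{k(d+1)}{2}$.

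First I would record the elementary properties of $T(r,f,\cdot)$ that make the argument work: additivity, $T(r,f,L_1 \otimes L_2) = T(r,f,L_1) + T(r,f,L_2) + O(1)$, which is immediate from $c_1(L_1\otimes L_2)=c_1(L_1)+c_1(L_2)$ after choosing product metrics; and the fact that if $M$ is effective (or merely has a global section not vanishing identically on $f(B)$), then $T(r,f,M) \geq -O(1)$, which follows from the First Main Theorem / Nevanlinna inequality applied to the divisor of that section, or simply from the fact that $\log$ of the norm of a holomorphic section is bounded above. Applying additivity with $L_1 = K_X(D)$ and $L_2 = kA - (K_X+D)$ gives
\[
T(r,f,K_X(D)) = k\,T(r,f,A) - T(r,f,kA-(K_X+D)) + O(1) \leq k\,T(r,f,A) + O(1),
\]
where the inequality uses that $kA-(K_X+D)$ is effective (after replacing $k$ by a larger integer if necessary so that some power is effective, or working with $\mathbb{Q}$-divisors and clearing denominators — this bookkeeping is routine). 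Then plugging in Theorem~\ref{nevan-ineq},
\[
T(r,f,K_X(D)) \leq_{\mathrm{exc}} k\cdot\frac{d+1}{2}\left(N^{(1)}(r,f,D)+N_{\mathrm{Ram}(\sigma)}(r)\right) + O(\log T(r,f,\cH)+\log r),
\]
and since $T(r,f,A)$ and $T(r,f,\cH)$ differ by at most a bounded multiplicative factor plus $O(1)$ (both being ample), the error term $O(\log T(r,f,A)+\log r)$ is absorbed into $O(\log T(r,f,\cH)+\log r)$. This is exactly the claimed inequality with $c_\psi = \tfrac{k(d+1)}{2}$.

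The only genuine subtlety — and the step I would be most careful about — is the claim that $k$ can be chosen independently of $f$. This is where one must be precise about what "$kA-(K_X+D)\geq 0$" means: it should be read as $kA-(K_X+D)$ being an effective (or nef, or globally generated) divisor class, which is a statement purely about $X$, $A$, and $K_X+D$, hence about $(X,D)$ and $\psi$ alone. Once that is granted, the dependence of $T(r,f,kA-(K_X+D))$ on $f$ is irrelevant because we only need the one-sided bound $T(r,f,kA-(K_X+D))\geq -O(1)$, valid for \emph{any} $f$ whose image is not contained in the chosen effective divisor; and if $f(B)$ happened to lie in that divisor one passes to a smaller $X$ or argues separately, but in fact the bound $\log\|s\|\le O(1)$ for a section $s$ holds regardless. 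So there is no real obstacle here, only the need to state the hypotheses cleanly. I would close by noting that $c_\psi$ visibly depends only on $d$ (the fiber dimension), $k$ (determined by $A$ and $K_X+D$), hence only on $(X,D)$ and $\psi$, as asserted.
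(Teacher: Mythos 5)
Your proposal is correct and is essentially the paper's own argument: the paper likewise chooses $k$ (depending only on $(X,D)$ and $\psi$) with $kA-(K_X+D)\geq 0$, sets $c_{\psi}=\tfrac{k(d+1)}{2}$, and deduces the theorem directly from Theorem~\ref{nevan-ineq}. You have merely spelled out the routine Nevanlinna bookkeeping (additivity of $T$ and nonnegativity of the characteristic of an effective class) that the paper leaves implicit.
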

With the knowledge and results we have from the Viehweg-Zuo construction, it should be possible to give good explicit bounds on the integer $k$ above and hence on the constant $c_{\psi}$.

\section{Applications}
In this section we make some applications of Theorem~\ref{nevan-ineq}.
The first application is a generalization of a classical result due to Nadel {\cite[Theorem~0.2]{nad89}}.
\begin{cor}\label{nadel}
Let $(X,D)$ be the same as in Theorem~\ref{thm-A}. Then for any nonconstant entire curve $f:\,\mathbb{C} \to X$ which ramifies over $D$ with order at least $c$, a positive constant which only depends on $(X,D)$ and the family $\psi$ such that $f^*D \geq c\cdot \mathrm{Supp}\,f^*D$, we have $f(\mathbb{C}) \subset D$.   
\end{cor}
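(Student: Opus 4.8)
The plan is to argue by contradiction: assume $f(\mathbb{C})\not\subset\mathrm{Supp}\,D$. Since $f$ is an entire curve we put $B=\mathbb{C}$ and $\sigma=\mathrm{id}_{\mathbb{C}}$, so that $N_{\mathrm{Ram}(\sigma)}(r)\equiv 0$ and Theorem~\ref{nevan-ineq} applies to $f$ directly. The goal is to show that once the ramification constant $c$ is taken large enough — in a way that depends only on $(X,D)$ and $\psi$ — this forces $f$ to be constant, which is the desired contradiction.

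First I would feed the ramification hypothesis into Theorem~\ref{nevan-ineq}. The inequality $f^*D\ge c\cdot\mathrm{Supp}\,f^*D$ gives $\mathrm{ord}_p f^*D\ge c\cdot\min\{1,\mathrm{ord}_p f^*D\}$ at every $p\in B$, hence $c\,N^{(1)}(r,f,D)\le N(r,f,D)$; the Nevanlinna inequality gives $N(r,f,D)\le T(r,f,\mathcal{O}_X(D))+O(1)$; and ampleness of $A$ lets us fix, once and for all, an integer $k_0$ depending only on $(X,D)$ and $A$ (hence only on $(X,D)$ and $\psi$) with $k_0A-\mathcal{O}_X(D)$ globally generated, so that $T(r,f,\mathcal{O}_X(D))\le k_0\,T(r,f,A)+O(1)$. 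Putting these together, Theorem~\ref{nevan-ineq} becomes
\[
T(r,f,A)\le_{\mathrm{exc}}\frac{(d+1)k_0}{2c}\,T(r,f,A)+O(\log T(r,f,\cH)+\log r),
\]
and since $\cH$ and $A$ are both ample, $T(r,f,\cH)=O(T(r,f,A))$. Choosing $c>(d+1)k_0$ the self-term is absorbed, and a standard growth estimate of Nevanlinna theory (using also $\log T\le\varepsilon T+O_{\varepsilon}(1)$) yields $T(r,f,A)=O(\log r)$, with an implied constant $K$ depending only on $(X,D,\cH,A,\psi)$ and not on $f$.

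Next I would convert this into rigidity and conclude. Since $N(r,f,D)\le k_0\,T(r,f,A)+O(1)=O(\log r)$, if $f$ met $D$ at some point $p$ then $N(r,f,D)\ge c\log(r/|p|)+O(1)$, forcing $c\le K$; so enlarging $c$ beyond $K$ as well, we get $f^{-1}(D)=\emptyset$, i.e. $f\colon\mathbb{C}\to U$. On the other hand, a nonconstant entire curve into a projective variety with characteristic function $O(\log r)$ (equivalently, of finite $A$-area) is rational, i.e. extends to a morphism $\bar f\colon\mathbb{P}^1\to X$, and here $\bar f^{-1}(D)\subseteq\{\infty\}$, so $\#\bar f^{-1}(D)_{\mathrm{red}}\le 1$. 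The curvature current inequality (\ref{current-ineq-sum}) — whose proof uses only the material of \S\ref{VZ}, and no finite map to $\mathbb{C}$ — holds for $\bar f$ on the compact base $\mathbb{P}^1$; integrating it there, where the $dd^c\log\phi_q$ terms integrate to zero by Stokes, gives $\deg\bar f^*A\le\frac{m+1}{2}\deg(\bar f')^*\mathcal{O}(1)$. But the tautological inequality on $\mathbb{P}^1$, coming from the inclusion $T_{\mathbb{P}^1}(-\log\bar f^{-1}(D))\hookrightarrow(\bar f')^*\mathcal{O}(-1)$ induced by $d\bar f$, gives $\deg(\bar f')^*\mathcal{O}(1)\le 2g(\mathbb{P}^1)-2+\#\bar f^{-1}(D)_{\mathrm{red}}\le -1$. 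Hence $\deg\bar f^*A<0$, contradicting the ampleness of $A$ together with the nonconstancy of $\bar f$. Therefore $f(\mathbb{C})\subset D$.

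I expect the main obstacle to be precisely this last passage, from the analytic bound $T(r,f,A)=O(\log r)$ to an honest contradiction. One needs the classical fact that such an $f$ is rational, and then must re-run, in the compact setting, the computation of \S\ref{VZ}--\S\ref{tauto-ineq}: this is routine in itself, but one should check that the Borel quasi-unipotence of monodromy, Schmid's Hodge-norm estimates and Mumford goodness survive the finite base change that may be needed on $\mathbb{P}^1$ when $\bar f^{-1}(D)=\{\infty\}$, and that the iteration stays nontrivial ($m\ge 1$, i.e. the rational curve is not swallowed by the degeneracy locus of $\rho^{d-1,1}\circ\tau^1$ — otherwise one first has to treat such algebraically degenerate curves, e.g. by descending induction on dimension, or by invoking the Arakelov-type rigidity that a non-isotrivial family over $\mathbb{P}^1$ or over $\mathbb{A}^1$ does not exist, which is incompatible with quasi-finiteness of the classifying map). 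A secondary point is constant bookkeeping: one must verify at each step — the choice of $k_0$, the constant $K$ in the growth estimate, the $O(\cdot)$ in the tautological inequality of \S\ref{tauto-ineq} — that nothing depends on the individual curve $f$, so that the resulting threshold for $c$ is a function of $(X,D)$ and $\psi$ alone.
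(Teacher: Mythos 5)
The first half of your argument is exactly the paper's: you feed $f^*D\ge c\cdot\mathrm{Supp}\,f^*D$ into Theorem~\ref{nevan-ineq}, use the Nevanlinna inequality $N(r,f,D)\le T(r,f,\mathcal{O}_X(D))+O(1)$, and absorb the $D$-term by ampleness of $A$ (the paper fixes $k$ with $A^{k}(-3D)$ ample and takes $c=k(d+1)/2$ — the same bookkeeping as your $k_0$), arriving at $T(r,f,A^k)=O(\log r)$. Where you part ways is in converting this logarithmic bound into a contradiction. The paper does it in one line: replace $f$ by $f\circ\exp$. Since $\exp$ is unramified, $f\circ\exp$ again satisfies the ramification hypothesis, so the same chain of inequalities would give $T(r,f\circ\exp,A^k)=O(\log r)$; but for nonconstant $f$ the curve $f\circ\exp$ is never rational and its characteristic function grows faster than any multiple of $\log r$ — contradiction. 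This trick needs neither the rationality of curves of logarithmic growth, nor any re-run of the Viehweg--Zuo/tautological machinery on a compact base, nor uniformity in $f$ of the implied constants.

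Your alternative ending has a genuine unresolved gap, which you yourself flag but do not close. To run the degree computation on $\mathbb{P}^1$ you need the curvature inequality (\ref{current-ineq-sum}) for the rational curve $\bar f$, and that requires $m\ge 1$ for this particular curve, i.e.\ that the lift of $\bar f$ avoids the degeneracy locus of $\rho^{d-1,1}\circ\tau^1$, which is only \emph{generically} injective; rational (or $\mathbb{A}^1$-) curves sitting in that locus are exactly the algebraically degenerate case your computation cannot see. Your proposed fixes — descending induction, or the rigidity statement that no maximally varied family of the allowed type exists over $\mathbb{P}^1$ or $\mathbb{A}^1$ — import hyperbolicity/rigidity theorems (of Viehweg--Zuo type, or the results of \cite{DLSZ}) that are not established in this paper and are themselves substantial in the semi-ample canonical sheaf setting; they are not ``routine.'' A second, smaller issue: to force $f^{-1}(D)=\emptyset$ by enlarging $c$ you need the constant $K$ in $T(r,f,A)\le K\log r$ — hence the error term in the tautological inequality — to be uniform in $f$, since $c$ must be chosen before the curve; the paper's precomposition trick requires no such uniformity. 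So as written your proof works only modulo these external inputs; replacing your final step by the $f\circ\exp$ argument closes it completely.
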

\begin{proof}
We follow Nadel's strategy.
Suppose $f(\mathbb{C}) \not\subset D$. Then we can use the inequality obtained in Theorem~\ref{nevan-ineq}.\\
Since $A$ is ample, we can find an integer $k$ such that $A^{k}(-3D)$ is also ample.\\  
Now we take $c:= k(d+1)/2$.
Note that the assumption on the entire curve $f$ implies that 
\[
f^*D \geq \frac{k(d+1)}{2} \mathrm{Supp}\,f^*D.
\] 
Then by Theorem~\ref{nevan-ineq} we have
\[
T(r,f,A^k) \leq_{\mathrm{exc}} \frac{k(d+1)}{2} N^{(1)}(r,f,D) + O(\log T(r,f,\cH) + \log r) \leq N(r,f,D) + O(\log T(r,f,\cH) + \log r) 
\]
for $r \gg 0$.\\
By the first main theorem we know that $N(r,f,D) \leq T(r,f,D) + O(1)$. Thus we have
\[
T(r,f,A^k) \leq_{\mathrm{exc}} T(r,f,D) + O(\log T(r,f,\cH) + \log r).
\]
Dividing both side by $T(r,f,A^k)$, we get
\[
1 \leq \frac{T(r,f,D)}{T(r,f,A^k)} + O \left( \frac{\log T(r,f,\cH)}{T(r,f,A^k)}  + \frac{\log r}{T(r,f,A^k)} \right) \leq \frac{1}{2}   + c\cdot \frac{\log r}{T(r,f,A^k)}
\]
for some positive constant $c$, and $r \gg 0$. Therefore we get
\[
T(r,f,A^k) \leq 2c \, \log r
\]
for $r \gg 0$. But this is impossible since we can always replace $f:\,\mathbb{C} \to X$ by $\mathbb{C} \xrightarrow{\mathrm{exp}} \mathbb{C} \xrightarrow{f} X$.
\end{proof}
\begin{rmk}\label{rmk-nad}
Corollary~\ref{nadel} can be regarded as a generalization of Nadel's theorem because in the case that $U= X \setminus D$ carries a family of principally polarized abelian varieties, one can always find a covering $\eta:\, X' \to X$ which is unramified over $U$ and has sufficiently large ramification order over $D$ (cf. the proof of {\cite[Proposition~4.4]{mum77}}). Now for any entire curve $f:\, \mathbb{C} \to X'$, the composed entire curve $\eta \circ f$ satisfies the assumption in Corollary~\ref{nadel} and one conclude that $f(\mathbb{C}) \subset D'=(\eta^*D)_{\mathrm{red}}$.    
\end{rmk}

Our second application is about the second main theorem on the Siegel modular varieties.
\begin{cor}\label{siegel}
Let $\mathcal{A}^{[n]}_g$ be the moduli space of principally polarized abelian varieties with level-$n$ structure ($n\geq 3$). Denote by $\bar{\mathcal{A}}^{[n]}_g$ the smooth compactification of $\mathcal{A}^{[n]}_g$ and $D:= \bar{\mathcal{A}}^{[n]}_g \setminus \mathcal{A}^{[n]}_g$ is the normal crossing boundary divisor. Let $f:\, B \to \bar{\mathcal{A}}^{[n]}_g$ be a holomorphic curve with $f(B) \not\subset \mathrm{Supp}\,D$. 
Then we have
\[
T(r,f,K_{\bar{\mathcal{A}}^{[n]}_g}(D)) \leq_{\mathrm{exc}} \frac{(1+g)^2}{2} \left( N^{(1)}(r,f,D) + N_{\mathrm{Ram}(\sigma)}(r) \right) + O(\log T(r,f,\cH) + \log r).
\]   
\end{cor}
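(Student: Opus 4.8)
The plan is to apply Theorem~\ref{nevan-ineq} to the universal abelian scheme over $\mathcal{A}^{[n]}_g$, and then to convert the resulting estimate for the Viehweg line bundle into one for $K_X(D)$ via the classical identity $K_{\bar{\mathcal{A}}^{[n]}_g}(D)=(g+1)\bar\lambda$, where $\bar\lambda$ is the Mumford-extended Hodge line bundle.

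First I would check the hypotheses of Theorem~\ref{thm-A}. For $n\geq 3$ the moduli problem of principally polarized abelian $g$-folds with level-$n$ structure is fine; let $\psi\colon\mathcal{X}^{[n]}_g\to\mathcal{A}^{[n]}_g$ be the universal abelian scheme. Each fibre is an abelian variety, hence has trivial --- in particular semi-ample --- canonical sheaf and fixed Hilbert polynomial $h$, and the classifying map $\mathcal{A}^{[n]}_g\to\mathcal{M}_h$ is finite onto its image, so quasi-finite. Taking for $X$ the given smooth projective toroidal compactification $\bar{\mathcal{A}}^{[n]}_g$ with reduced normal crossing boundary $D$, the pair $(X,D)$ together with $\psi$ satisfies the hypotheses of Theorem~\ref{thm-A}, and the fibre dimension is $d=g$.

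Next I would pin down the line bundle $A$. For an abelian scheme the Hodge bundle $\mathbb{E}:=e^{*}\Omega^1_{\mathcal{X}/\mathcal{A}}$ ($e$ the zero section) has rank $g$, one has $\Omega^1_{\mathcal{X}/\mathcal{A}}=\psi^{*}\mathbb{E}$, hence $\psi_{*}\omega^{\mu}_{\mathcal{X}/\mathcal{A}}=(\det\mathbb{E})^{\mu}=:\lambda^{\mu}$, so the Viehweg line bundle restricts over $U=\mathcal{A}^{[n]}_g$ to a positive power of the Hodge line bundle $\lambda$. Since in weight one the Viehweg--Zuo construction reduces to the classical Hodge-bundle picture, and since the Nevanlinna output of Theorem~\ref{nevan-ineq} uses only that $A$ is a line bundle on $X$ restricting over $U$ to the Viehweg bundle --- not its ampleness --- I may normalize so that $A=\bar\lambda:=\det\bar{\mathbb{E}}$, the canonical extension of $\lambda$ to $X$ (which is nef and big). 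Theorem~\ref{nevan-ineq} with $d=g$ then gives
\[
T(r,f,\bar\lambda)\leq_{\mathrm{exc}}\frac{g+1}{2}\bigl(N^{(1)}(r,f,D)+N_{\mathrm{Ram}(\sigma)}(r)\bigr)+O(\log T(r,f,\cH)+\log r).
\]
Now the classical fact for Siegel modular varieties says that the logarithmic cotangent bundle of the toroidal compactification is $\Sym^2\bar{\mathbb{E}}$, so
\[
K_X(D)=\det\Omega^1_X(\log D)=\det(\Sym^2\bar{\mathbb{E}})=(\det\bar{\mathbb{E}})^{\otimes(g+1)}=(g+1)\,\bar\lambda .
\]
Feeding this into the previous inequality yields $T(r,f,K_X(D))=(g+1)\,T(r,f,\bar\lambda)+O(1)$ and hence the asserted bound with constant $(g+1)^2/2$.

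The step I expect to be the main obstacle is the passage from the abstract ample $A$ of the Viehweg--Zuo construction to the concrete, merely nef-and-big, line bundle $\bar\lambda$: one must verify that the curvature-current inequality of Section~\ref{VZ} and the Nevanlinna estimate of Theorem~\ref{nevan-ineq} are robust enough to be run directly with $\bar\lambda$ in place of $A$ --- they essentially are, because the only positivity invoked there is $\Theta(\det G)\leq 0$, which is pure Hodge theory --- so that the \emph{sharp} constant $(g+1)^2/2$ emerges rather than the crude $c_\psi=k(g+1)/2$ of Theorem~\ref{SMT}. Packaged together with the computation $K_X(D)=(g+1)\bar\lambda$ (equivalently: the maximal length of Higgs iteration for the Siegel family is at most $g$, and $\Omega^1$ of Siegel space is $\Sym^2$ of the Hodge bundle), this is the quantitative content of the statement.
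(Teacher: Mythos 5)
Your proposal is correct and follows essentially the same route as the paper: apply Theorem~\ref{nevan-ineq} with $d=g$ to the universal family, identify the Viehweg/ample line bundle with the (extended) Hodge line bundle $E^{g,0}=\det E^{1,0}$ (the paper does this via $F^{p,q}\cong E^{p,q}\otimes(E^{g,0})^{-1}$, which is your weight-one normalization), and then use $\Omega^1_{\bar{\mathcal{A}}^{[n]}_g}(\log D)\cong \Sym^2E^{1,0}$ to get $K_X(D)\cong(E^{g,0})^{\otimes(g+1)}$ and hence the constant $(g+1)^2/2$. Your extra remark that only $\Theta(\det G)\leq 0$ (not ampleness of $A$) is needed to run the estimate with the nef-and-big $\bar\lambda$ is exactly the point the paper implicitly relies on.
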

\begin{proof}
Since $n \geq 3$, we know that $\mathcal{A}^{[n]}_g$ is a fine moduli space and thus carries a universal family. In this situation the (log) canonical divisor of the Siegel modular variety is related to the variations of Hodge structures of the universal family.\\

Denote by $\mathbb{V}$ the associated polarized VHS of weight one and $(E=E^{1,0} \oplus E^{0,1},\theta= \theta^{1,0} \oplus \theta^{0,1})$ is the Hodge bundle of it. We use the same notation for Deligne's canonical extension of it. Thus $(E,\theta)$ is a graded logarithmic Higgs bundle on $\bar{\mathcal{A}}^{[n]}_g$ with $\mathrm{rk}\,E^{1,0} =g$. Furthermore, we have the isomorphism
\[
\Omega^1_{\bar{\mathcal{A}}^{[n]}_g}(D) \cong \Sym^2 E^{1,0}
\]
and thus
\[
K_{\bar{\mathcal{A}}^{[n]}_g}(D) \cong \mathrm{det}\,\Sym^2 E^{1,0} = (\mathrm{det}\,E^{1,0})^{\otimes {g+2-1 \choose g}} = (E^{g,0})^{\otimes g+1}
\]
where $E^{g,0} = \bigwedge^gE^{1,0}$ is the first Hodge bundle associated to the VHS of weight-$g$ (cf. {\cite[\S2]{falt83}} or {\cite[II.1]{HS02}}).\\

Now we shall apply our main theorem~\ref{nevan-ineq}. Note that in the case of families of abelian varieties the Viehweg line bundle $A$ is nothing but the first Hodge bundle $E^{g,0}$ ( $F^{p,q} = R^q\psi_*(\Omega^p_{Y/X}(\log \Delta) \otimes \omega^{-1}_{X/Y}) \cong R^q\psi_*(\Omega^p_{Y/X}(\log \Delta) ) \otimes \psi_*\omega^{-1}_{X/Y} = E^{p,q} \otimes (E^{g,0})^{-1}$ ). Thus we have
\[
T(r,f,E^{g,0}) \leq_{\mathrm{exc}} \frac{g+1}{2} \left(N^{(1)}(r,f,D) + N_{\mathrm{Ram}(\sigma)}(r)\right) + O(\log T(r,f,\cH) + \log r).
\] 
On the other hand, $K_{\bar{\mathcal{A}}^{[n]}_g}(D) \cong (E^{g,0})^{\otimes g+1}$. Those two facts give us the inequality in the statement.
\end{proof}


\end{document}